\theoremstyle{definition}
\newtheorem{theorem}{Theorem}[section]
\newtheorem{definition}[theorem]{Definition}
\newtheorem{lemma}[theorem]{Lemma}
\title{Convergence of filtered weak solutions to the 2D Euler equations with vortex sheet initial data}
\author{Takeshi Gotoda \footnote{Research Institute for Electronic Science, Hokkaido University, Kita 12 Nishi 7, Kita-ku, Sapporo,  Hokkaido, JAPAN  E-mail : gotoda@es.hokudai.ac.jp} }
\date{}
\begin{document}


\maketitle

\begin{abstract}
We study weak solutions of the two-dimensional (2D) filtered Euler equations whose vorticity is a finite Radon measure and velocity has locally finite kinetic energy, which is called the vortex sheet solution. The filtered Euler equations are a regularized model based on a spatial filtering to the Euler equations. The 2D filtered Euler equations have a unique global weak solution for measure valued initial vorticity, while the 2D Euler equations require initial vorticity to be in the vortex sheet class with a distinguished sign for the existence of global solutions. In this paper, we prove that vortex sheet solutions of the 2D filtered Euler equations converge to those of the 2D Euler equations in the limit of the filtering parameter provided that initial vortex sheet has a distinguished sign. We also show that a simple application of our proof yields the convergence of the vortex blob method for vortex sheet solutions. Moreover, we make it clear what kind of condition should be imposed on the spatial filter to show the convergence results and, according to the condition, these results are applicable to well-known regularized models like the Euler-$\alpha$ model and the vortex blob model.

\end{abstract}


\section{Introduction}

We are concerned with solutions of incompressible and inviscid equations with {\it vortex sheet initial data}. The motion of inviscid incompressible 2D flows is described by the 2D Euler equations:
\begin{equation}
\partial_t \boldsymbol{v} + (\boldsymbol{v} \cdot \nabla) \boldsymbol{v} + \nabla p = 0, \qquad \operatorname{div} \boldsymbol{v} = 0, \qquad \boldsymbol{v}(\boldsymbol{x}, 0) = \boldsymbol{v}_0(\boldsymbol{x}),  \label{Euler}
\end{equation}
where $\boldsymbol{v}=\boldsymbol{v}(\boldsymbol{x}, t) =  ( v_1(\boldsymbol{x}, t), v_2(\boldsymbol{x}, t))$ is the fluid velocity field, $p=p(\boldsymbol{x}, t)$ is the scalar pressure, and $\boldsymbol{v}_0$ is the given initial velocity.  An incompressible velocity field $\boldsymbol{v}_0$ is called vortex sheet initial data provided that its vorticity $q_0 = \operatorname{curl} \boldsymbol{v}_0 = \partial_{x_1} v_2 - \partial_{x_2} v_1 $ belongs to the space of finite Radon measures on $\mathbb{R}^2$, denoted by $\mathcal{M}(\mathbb{R}^2)$, and $\boldsymbol{v}_0$ has locally finite kinetic energy, i.e.,
\begin{equation*}
\int_{|\boldsymbol{x}| \leq R} |\boldsymbol{v}_0|^2 d\boldsymbol{x} < C(R)
\end{equation*}
for any $R > 0$. To consider flows such as vortex sheet, it is required to consider a weak form of the Euler equations. The classical weak formulation leads the following definition of a weak solution in weak velocity form, see \cite{Diperna}.

\begin{definition}
An incompressible velocity field $\boldsymbol{v} \in L^\infty([0, T]; L^2_{\mathrm{loc}}(\mathbb{R}^2))$ is a weak solution of (\ref{Euler}) provided that
\begin{itemize}
\item[(i)] for any $\Psi =(\psi_1, \psi_2)$ with $\psi_1$, $\psi_2 \in C_c^\infty([0,T]\times\mathbb{R}^2)$ and $\operatorname{div} \Psi = 0$,
\begin{equation*}
\int_0^T \int_{\mathbb{R}^2} \left( \partial_t \Psi \cdot \boldsymbol{v} + \nabla \Psi : \boldsymbol{v} \otimes \boldsymbol{v} \right) d\boldsymbol{x} dt = 0,
\end{equation*}
\item[(ii)] $\boldsymbol{v}$ belongs to $\mathrm{Lip}([0,T];H_{\mathrm{loc}}^{-L}(\mathbb{R}^2))$ for some $L > 0$ and $\boldsymbol{v}(0) = \boldsymbol{v}_0$ in $H_{\mathrm{loc}}^{-L}(\mathbb{R}^2)$,
\end{itemize}
where $\boldsymbol{v} \otimes \boldsymbol{v} = (v_i v_j)$, $\nabla \Psi = (\partial_j \psi_i)$ and $A : B = \sum_{i,j} a_{ij} b_{ij}$. $H^s(\mathbb{R}^2)$ denotes the Sobolev space and the localized Sobolev space $H_{\mathrm{loc}}^s(\mathbb{R}^2)$ is the space of distributions $f$ such that $\rho f \in H^s(\mathbb{R}^2)$ for any $\rho \in C_c^\infty(\mathbb{R}^2)$.
\label{def-sol-velocity}
\end{definition}

Taking the curl of (\ref{Euler}) and setting the vorticity $q = \operatorname{curl} \boldsymbol{v}$, we obtain the transport equation for $\omega$: 
\begin{equation}
\partial_t q + (\boldsymbol{v} \cdot \nabla) q = 0, \qquad q(\boldsymbol{x}, 0) = q_0(\boldsymbol{x}).  \label{vEuler}
\end{equation}
Then, the velocity $\boldsymbol{v}$ is recovered from the vorticity $\omega$ via the Biot-Savart law:
\begin{equation}
\boldsymbol{v}(\boldsymbol{x}) = \left( \boldsymbol{K} \ast q \right)(\boldsymbol{x}) = \int_{\mathbb{R}^2} \boldsymbol{K}(\boldsymbol{x} - \boldsymbol{y}) q(\boldsymbol{y}) d \boldsymbol{y}, \label{biot}
\end{equation}
in which $\boldsymbol{K}$ is a singular integral kernel defined by 
\begin{equation*}
\boldsymbol{K}(\boldsymbol{x}) = \nabla^\perp G(\boldsymbol{x}) = \frac{1}{2 \pi} \frac{\boldsymbol{x}^\perp}{\left| \boldsymbol{x} \right|^2}, \qquad G(\boldsymbol{x}) = \frac{1}{2 \pi} \log{|\boldsymbol{x}|}
\end{equation*}
with $\nabla^\perp = (- \partial_{x_2}, \partial_{x_1})$ and $\boldsymbol{x}^\perp = (- x_2, x_1)$. The function $G$ is a fundamental solution to the 2D Laplacian. Since the weak formulation of (\ref{vEuler}) is described by
\begin{equation*}
\int_0^T \int_{\mathbb{R}^2} q \left( \partial_t \psi + \nabla \psi \cdot \boldsymbol{v} \right) d\boldsymbol{x} dt = 0,
\end{equation*}
substituting (\ref{biot}), the weak vorticity form of the 2D Euler equations is obtained as follows, see \cite{Schochet}.

\begin{definition}
A vorticity $q \in L^\infty( [0, T] ; \mathcal{M}(\mathbb{R}^2)\cap H_{\mathrm{loc}}^{-1}(\mathbb{R}^2))$ is a weak solution of (\ref{vEuler}) and (\ref{biot}) provided that 
\begin{itemize}
\item[(i)] for any $\psi\in C_c^\infty(\mathbb{R}^2\times(0,T))$,
\begin{align*}
0 &= \int_0^T \int_{\mathbb{R}^2} \partial_t \psi(\boldsymbol{x},t) q(\boldsymbol{x},t) d\boldsymbol{x} dt \\
&\quad + \int_0^T \int_{\mathbb{R}^2} \int_{\mathbb{R}^2} H_{\psi}(\boldsymbol{x},\boldsymbol{y},t) q(\boldsymbol{y},t) q(\boldsymbol{x},t) d\boldsymbol{y} d\boldsymbol{x} dt \\
&\equiv W_L(q;\psi) + W_{NL}(q;\psi),
\end{align*}
where 
\begin{equation*}
H_{\psi}(\boldsymbol{x},\boldsymbol{y},t) = \frac{1}{2} \boldsymbol{K}(\boldsymbol{x} - \boldsymbol{y}) \cdot \left( \nabla \psi(\boldsymbol{x},t) - \nabla \psi(\boldsymbol{y},t) \right),
\end{equation*}
\item[(ii)] $q \in \mathrm{Lip}([0,T];H_{\mathrm{loc}}^{-(L+1)}(\mathbb{R}^2))$ for some $L > 0$ and $q(0) = q_0$ in $H_{\mathrm{loc}}^{-(L+1)}(\mathbb{R}^2)$.
\end{itemize}
\label{def-sol-vorticity}
\end{definition}
It is important to remark that, for any fixed $t \in (0,T)$, $H_{\psi}(\boldsymbol{x}, \boldsymbol{y}, t)$ is bounded on $\mathbb{R}^2 \times \mathbb{R}^2$, continuous outside the diagonal $\boldsymbol{x} = \boldsymbol{y}$, and tends to zero at infinity \cite{Delort}. The global existence of a weak solution of (\ref{vEuler}) and (\ref{biot}) has been established for $q_0 \in L^1(\mathbb{R}^2) \cap L^p(\mathbb{R}^2)$ with $1 < p \leq \infty$ and the uniqueness holds for the case of $p = \infty$\cite{Marchioro,Yudovich,Diperna}. By the Delort's result \cite{Delort}, the existence theorem is extended to the case of vortex sheet initial data $q_0 \in \mathcal{M}(\mathbb{R}^2)\cap H_{\mathrm{loc}}^{-1}(\mathbb{R}^2)$ with a distinguished sign, in which the solution $q \in L^\infty( [0, T] ; \mathcal{M}(\mathbb{R}^2)\cap H_{\mathrm{loc}}^{-1}(\mathbb{R}^2))$ satisfies the 2D Euler equations in the sense of Definition~\ref{def-sol-velocity}, see \cite{Schochet}. 


Another approach for analyzing the dynamics of vortex sheet employs the Birkhoff-Rott equation \cite{Birkhoff,Rott} described by
\begin{equation}
\frac{\partial \overline{z}}{\partial t} (\Gamma, t) = \frac{1}{2 \pi i} \mathrm{p.v.} \int_{-\infty}^\infty \frac{d \Gamma'}{z(\Gamma, t) - z(\Gamma', t)}, \label{BR}
\end{equation}
where $z(\Gamma, t) = x_1(\Gamma, t) + i x_2(\Gamma ,t)$ is a complex curve of the vortex sheet and $\Gamma$ represents the circulation. The Birkhoff-Rott equation is is formally derived from the 2D Euler equations with the assumption that the vorticity is concentrated on a curve for any fixed time. Although, for sufficiently {\it regular} vortex sheets, the equivalence between descriptions of the Birkhoff-Rott equation and weak forms of the Euler equations has been established in \cite{Lopes}, unlike weak forms of the 2D Euler equations, its initial-value problem seems to be ill-posed in the sense of Hadamard due to the Kelvin-Helmholtz instability. Indeed, the local existence of an analytic solution of (\ref{BR}) was proven in \cite{Caflisch,Sulem}, but asymptotic analysis and numerical computation suggest that vortex sheets develop a singularity in a finite time even for smooth initial curves \cite{Krasny(a),Meiron,Moore}. To study the behavior of a vortex sheet beyond the critical time, an inviscid regularization called the vortex blob method was introduced in numerical simulations \cite{Anderson,Chorin,Krasny} and it has been applied to a variety of numerical solutions of vortex sheets \cite{Krasny(b),Leonard,Nitsche}. These numerical simulations indicate that vortex sheets roll up into a spirals past the critical time and that infinite length spirals appear in the limit of the regularization parameter. For the vortex blob method, it has been shown in \cite{Liu} that approximated solutions converge to weak solutions of the 2D Euler equations with vortex sheet initial data.

The present paper treats a vortex sheet problem in terms of weak solutions of the 2D Euler equations. We consider the filtered Euler equations, which is derived by applying a spatial filtering to the Euler equations. Especially, we show the convergence of vortex sheet solutions of the 2D filtered Euler equations or their approximation by point-vortices converge to those to the 2D Euler equations. As mentioned later, some similar results have been shown for the vortex blob method or the Euler-$\alpha$ equations that is also a example of the filtered model. Our main purpose is to deal with the problems in a unified manner by analyzing the filtered Euler equations and make it clear what kind of condition for the filter is essential to show the convergence.

\section{The filtered Euler equations}

The 2D filtered Euler equations are an inviscid regularization of the 2D Euler equations and given by
\begin{equation}
\partial_t \boldsymbol{v}^\varepsilon + (\boldsymbol{u}^\varepsilon \cdot \nabla) \boldsymbol{v}^\varepsilon - (\nabla \boldsymbol{v}^\varepsilon)^T \cdot \boldsymbol{u}^\varepsilon + \nabla \Pi = 0, \label{REE}
\end{equation}
where $\Pi$ is the generalized pressure and $\boldsymbol{u}^\varepsilon$ is a spatially filtered velocity defined by
\begin{equation}
\boldsymbol{u}^\varepsilon(\boldsymbol{x}) = \left( h^\varepsilon \ast \boldsymbol{v}^\varepsilon \right) (\boldsymbol{x}) = \int_{\mathbb{R}^2} h^\varepsilon\left( \boldsymbol{x} - \boldsymbol{y} \right) \boldsymbol{v}^\varepsilon(\boldsymbol{y}) d\boldsymbol{y}, \quad h^\varepsilon(\boldsymbol{x}) = \frac{1}{\varepsilon^2} h \left( \frac{\boldsymbol{x}}{\varepsilon} \right) \label{Rvelo}
\end{equation}
with a scalar valued function $h \in L^1(\mathbb{R}^2)$. Refer to \cite{Foias,Holm(c)} for the derivation of the filtered Euler equations on the basis of (\ref{Rvelo}). In this paper, we call $h$ the {\it smoothing function} and, for simplicity, assume that $h$ is a positive and radial function and may have a singularity at the origin. Taking the curl of (\ref{REE}) with the incompressible condition, we obtain the transport equation for $q^\varepsilon = \operatorname{curl} \boldsymbol{v}^\varepsilon$ convected by $\boldsymbol{u}^\varepsilon$,
\begin{equation}
\partial_t q^\varepsilon + (\boldsymbol{u}^\varepsilon \cdot \nabla) q^\varepsilon = 0. \label{VFEE}
\end{equation}
Note that $\operatorname{div} \boldsymbol{u}^\varepsilon = 0$ and $\omega^\varepsilon = \operatorname{curl} \boldsymbol{u}^\varepsilon$ satisfies $\omega^\varepsilon = h^\varepsilon \ast q^\varepsilon$ when the convolution commutes with the differential operator. Since the Biot-Savart law gives $\boldsymbol{u}^\varepsilon = \boldsymbol{K} \ast \omega^\varepsilon$, the filtered velocity $\boldsymbol{u}^\varepsilon$ is recovered from the vorticity $q^\varepsilon$ as follows.
\begin{equation}
\boldsymbol{u}^\varepsilon = \boldsymbol{K}^\varepsilon \ast q, \qquad \boldsymbol{K}^\varepsilon = \boldsymbol{K} \ast h^\varepsilon. \label{F-Biot-Savart}
\end{equation}
The Lagrangian flow map $\boldsymbol{\eta}^\varepsilon$ associated with $\boldsymbol{u}^\varepsilon$ is given by
\begin{equation*}
\partial_t \boldsymbol{\eta}^\varepsilon(\boldsymbol{x}, t) = \boldsymbol{u}^\varepsilon \left( \boldsymbol{\eta}^\varepsilon(\boldsymbol{x}, t), t \right), \qquad  \boldsymbol{\eta}^\varepsilon(\boldsymbol{x}, 0) = \boldsymbol{x}. \label{flow-map}
\end{equation*}
We remark that $\boldsymbol{K}^\varepsilon$ satisfies $\boldsymbol{K}^\varepsilon = \nabla^\perp G^\varepsilon$, where $G^\varepsilon = G \ast h^\varepsilon$ is a solution to the 2D Poisson equation, $\Delta G^\varepsilon = h^\varepsilon$. Since $h$ is a radial function, $G^\varepsilon$ is also radial, i.e. $G^\varepsilon(\boldsymbol{x}) = G_r^\varepsilon(\vert\boldsymbol{x}\vert)$, and we have
\begin{equation}
\boldsymbol{K}^\varepsilon(\boldsymbol{x}) = \boldsymbol{K}(\boldsymbol{x}) P_K \left( \frac{\vert\boldsymbol{x}\vert}{\varepsilon} \right), \qquad P_K(r) = 2 \pi r \frac{\mbox{d}G_r^{\varepsilon=1}}{\mbox{d}r} (r). \label{K^eps-KP}
\end{equation}
Here, $P_K(r)$ is a monotonically increasing function satisfying $P_K(0) =0$ and $P_k(r) \rightarrow 1$ as $r \rightarrow \infty$, see \cite{G.}.

It is noteworthy that, taking a specific filter $h$, we obtain the well-known regularizations: the Euler-$\alpha$ model and the vortex blob model. Although both models are originally derived in different backgrounds, they are generalized in a unified manner by the spatial filtering \cite{Foias,Holm(c)}. The Euler-$\alpha$ model was first introduced in \cite{Holm(a),Holm(b)} on the basis of the Euler-Poincar\'{e} variational framework or the Lagrangian averaging to the Euler equations \cite{Marsden}. In the viewpoint of physical significance, the viscous extension of the Euler-$\alpha$ equations, the Navier-Stokes-$\alpha$ equations, are utilized as a turbulent model \cite{Chen(a),Chen,Foias,Foias(a),Lunasin,Mohseni}. In the 2D Euler-$\alpha$ model, the smoothing function $h^\alpha$ is given by a fundamental solution for the operator $1 - \alpha^2 \Delta$, that is, 
\begin{equation*}
h^\alpha(\boldsymbol{x}) = \frac{1}{\alpha^2} h \left( \frac{\boldsymbol{x}}{\alpha} \right), \qquad h(\boldsymbol{x}) = \frac{1}{2\pi} K_0 \left( |\boldsymbol{x}| \right),
\end{equation*}
where $K_0$ denotes the modified Bessel function of the second kind. Then, since we have $\boldsymbol{v}^\alpha = (1 - \alpha^2 \Delta) \boldsymbol{u}^\alpha $, the 2D filtered Euler equations for $\boldsymbol{u}^\alpha$ is described by
\begin{equation*}
(1 - \alpha^2 \Delta) \partial_t \boldsymbol{u}^\alpha + \boldsymbol{u}^\alpha \cdot \nabla (1 - \alpha^2 \Delta) \boldsymbol{u}^\alpha + (\nabla \boldsymbol{u}^\alpha)^T \cdot (1 - \alpha^2 \Delta) \boldsymbol{u}^\alpha = \nabla \Pi,
\end{equation*}
which are known as the Euler-$\alpha$ equations. Note that there exists a unique global weak solution for initial vorticity $q_0 \in \mathcal{M}(\mathbb{R}^2)$ and a weak solution for $q_0 \in L^\infty(\mathbb{R}^2)\cap L^1(\mathbb{R}^2)$ converges to that of the 2D Euler equations in the $\alpha \rightarrow 0$ limit \cite{Oliver}. Moreover, the convergence to the 2D Euler equations holds for vortex sheet initial data, that is, $q_0 \in \mathcal{M}(\mathbb{R}^2)$ with a distinguished sign and $\boldsymbol{v}_0 \in L_{\mathrm{loc}}^2(\mathbb{R}^2)$ \cite{Bardos}. 

On the other hand, the vortex blob method was introduced for a numerical scheme to compute the vortex sheets' evolutions in incompressible inviscid flows. In that method, the initial vorticity is approximated by a Dirac delta functions and the Biot-Savart formula is regularized by (\ref{F-Biot-Savart}) with the smoothing function $h^\sigma$ defined by
\begin{equation*}
h^\sigma(\boldsymbol{x}) = \frac{1}{\sigma^2} h \left( \frac{\boldsymbol{x}}{\sigma} \right), \qquad h(\boldsymbol{x}) = \frac{1}{\pi (|\boldsymbol{x}|^2 + 1)^2}.
\end{equation*}
Then, the filtered velocity field induced by a point-vortex at $\boldsymbol{x}_0$ is given by
\begin{equation*}
\boldsymbol{u}^\sigma(\boldsymbol{x}) = \frac{1}{2 \pi} \frac{(\boldsymbol{x}- \boldsymbol{x}_0)^\perp}{|\boldsymbol{x} - \boldsymbol{x}_0|^2 + \sigma^2}.
\end{equation*}
Unlike the Euler-$\alpha$ equations, the governing equation for $\boldsymbol{u}^\sigma$ is not described explicitly. The convergence of the vortex blob method in the $\sigma \rightarrow 0$ limit has been shown in \cite{Liu}, in which approximate solutions by the vortex blob method converge to weak solutions of the 2D Euler equations provided that $q_0 \in \mathcal{M}(\mathbb{R}^2)$ with a distinguished sign and $\boldsymbol{v}_0 \in L_{\mathrm{loc}}^2(\mathbb{R}^2)$.

From the above, the filtered Euler models are regarded as significant models for understanding physical phenomena in fluid mechanics. In the preceding study \cite{G.}, the global well-posedness for the 2D filtered models is considered in a unified manner and it has been shown that the 2D filtered Euler equations (\ref{VFEE}) with (\ref{F-Biot-Savart}) have a unique global weak solution with initial vorticity in $\mathcal{M}(\mathbb{R}^2)$. We remark that it is not necessary to assume that $q_0$ belongs to $H^{-1}_{\mathrm{loc}}(\mathbb{R}^2)$ and has a single sign, which implies that a unique global weak solution exits for point vortex initial vorticity, and this is a theoretical advantage of the 2D filtered Euler equations. As for the vortex sheet problem, since we have a unique Lagrangian flow map $\boldsymbol{\eta}^\varepsilon$ satisfying (\ref{flow-map}), the vorticity is concentrated on a curve for any time if initial vorticity is supported on a curve. Then, following the argument in \cite{Bardos}, the evolution of vortex sheets is described by the {\it filtered Birkhoff-Rot equation},
\begin{equation*}
\frac{\partial \boldsymbol{x}}{\partial t} (\Gamma, t) = \int_{-\infty}^\infty \boldsymbol{K}^\varepsilon(\boldsymbol{x}(\Gamma, t) - \boldsymbol{x}(\Gamma', t)) d \Gamma', \label{FBR}
\end{equation*}
which is equivalent to the weak formulation of the 2D filtered Euler equations, owing to the filtering regularization.

We show fundamental properties of the 2D filtered Euler equations. A weak solution $q^\varepsilon\in C([0,T];\mathcal{M}(\mathbb{R}^2))$ constructed in \cite{G.} satisfies the 2D filtered Euler equations in the sense that
\begin{equation*}
\int_0^T \int_{\mathbb{R}^2} \left( \partial_t \psi(\boldsymbol{x},t)  + \boldsymbol{u}^\varepsilon(\boldsymbol{x},t) \cdot \nabla \psi(\boldsymbol{x},t) \right) q^\varepsilon(\boldsymbol{x},t) d\boldsymbol{x} dt = 0  \label{WVFEE}
\end{equation*}
for any $\psi\in C_c^\infty(\mathbb{R}^2\times(0,T))$, which is equivalent to
\begin{align*}
0 = &\int_0^T \int_{\mathbb{R}^2} \partial_t \psi(\boldsymbol{x},t) q^\varepsilon(\boldsymbol{x},t) d\boldsymbol{x} dt \\
&\quad + \int_0^T \int_{\mathbb{R}^2} \int_{\mathbb{R}^2} H^\varepsilon_{\psi}(\boldsymbol{x},\boldsymbol{y},t) q^\varepsilon(\boldsymbol{y},t) q^\varepsilon(\boldsymbol{x},t) d\boldsymbol{y} d\boldsymbol{x} dt \\
&\equiv W_L(q^\varepsilon; \psi) + W_{NL}^\varepsilon(q^\varepsilon; \psi),
\end{align*}
where 
\begin{align*}
H^\varepsilon_{\psi}(\boldsymbol{x},\boldsymbol{y},t) &= \frac{1}{2} \boldsymbol{K}^\varepsilon(\boldsymbol{x} - \boldsymbol{y}) \cdot \left( \nabla \psi(\boldsymbol{x},t) - \nabla \psi(\boldsymbol{y},t) \right).
\end{align*}
Owing to $q^\varepsilon \in C([0,T];\mathcal{M}(\mathbb{R}^2))$, the initial condition $q^\varepsilon(0) = q_0$ in $\mathcal{M}(\mathbb{R}^2)$ makes sense and $q^\varepsilon$ is expressed by $q^\varepsilon(\boldsymbol{x}, t) = q_0 \left( \boldsymbol{\eta}^\varepsilon(\boldsymbol{x}, - t) \right)$, which yields $\| q^\varepsilon(\cdot, t) \|_{\mathcal{M}} = \| q_0 \|_{\mathcal{M}}$. The filtered vorticity $\omega^\varepsilon$ also belongs to $C([0,T];\mathcal{M}(\mathbb{R}^2))$, since it follows that 
\begin{align*}
\left| \int_{\mathbb{R}^2} \psi(\boldsymbol{x}) \omega^\varepsilon(\boldsymbol{x}) d\boldsymbol{x} \right| \leq \| \psi \ast h^\varepsilon \|_{L^\infty} \| q^\varepsilon \|_{\mathcal{M}} \leq \| \psi \|_{L^\infty} \| q^\varepsilon \|_{\mathcal{M}}
\end{align*}
for any $\psi \in C_0(\mathbb{R}^2)$, the space of continuous functions vanishing at infinity, and $\| \omega^\varepsilon(\cdot, t) \|_{\mathcal{M}} \leq \| q^\varepsilon(\cdot, t) \|_{\mathcal{M}} = \| q_0 \|_{\mathcal{M}}$. In this paper, we omit the domain in the norm when it is the entire space $\mathbb{R}^2$. We also remark that $H^\varepsilon_{\psi}(\boldsymbol{x},\boldsymbol{y},t)$ is continuous on $\mathbb{R}^2 \times \mathbb{R}^2$ and tends to zero at infinity for any $t \in (0,T)$. Indeed, $\boldsymbol{K}^\varepsilon$ is a quasi-Lipschitz continuous function in $C_0(\mathbb{R}^2)$ under the assumption of the global well-posedness in \cite{G.}, which enables us to define the filtered Biot-Savart formula (\ref{F-Biot-Savart}) with the vorticity in $\mathcal{M}(\mathbb{R}^2)$.

\section{Main results}

In this paper, we investigate the $\varepsilon \rightarrow 0$ limit of solutions of the 2D filtered Euler equations, (\ref{VFEE}) and (\ref{F-Biot-Savart}), with vortex sheet initial data. For the 2D Euler-$\alpha$ equations, it has been shown in \cite{Bardos} that vortex sheet solutions with a distinguished sign converge to those of the 2D Euler equations in the $\alpha \rightarrow 0$ limit. Our question is whether the convergence result obtained in the Euler-$\alpha$ model holds for the filtered Euler equations in the limit of $\varepsilon \rightarrow 0$. This is not an obvious extension of the result in \cite{Bardos}. Indeed, according to their proof, the uniform decay in small disks of the filtered vorticity $\omega^\alpha$,
\begin{equation*}
\sup_{0\leq t \leq T, \boldsymbol{x}_0 \in\mathbb{R}^2 } \int_{|\boldsymbol{x} - \boldsymbol{x}_0|\leq r} \omega^\alpha(\boldsymbol{x}, t) d\boldsymbol{x} < c(T) \left[ \log{\left(\frac{1}{r}\right)} \right]^{-1/2},
\end{equation*}
plays essential roles to show the convergence theorem, and it is derived by using the feature of the modified Bessel function appearing in the smoothing function $h^\alpha$. In our proof, in order to treat the filtered models in a unified manner, we follow the ideas in \cite{Majda,Majda-1}, that is, derive the uniform decay for $q^\varepsilon$ by using the pseudo-energy of the 2D filtered Euler equations. Our another concern is what is the essential property of the smoothing function for the convergence. We clarify the functional class of smoothing functions that guarantees the convergence of filtered vortex sheet solutions to weak solutions of the Euler equations regardless of spatial filters.

To state main theorems, we introduce a weighting function by $w_{\alpha}(\boldsymbol{x}) \equiv  |\boldsymbol{x}|^\alpha$ for $\boldsymbol{x} \in \mathbb{R}^2$, where $\chi_{A}(\boldsymbol{x})$ is an indicator function.

\begin{theorem}
{\it We assume that the 2D filtered Euler equations with the smoothing function $h$ have a unique global weak solution for $q_0 \in \mathcal{M}(\mathbb{R}^2)$. Suppose that $h \in C_0(\mathbb{R}^2 \setminus \{ 0 \}) \cap L^1(\mathbb{R}^2)$ is a positive and radial function satisfying
\begin{equation}
w_1 h \in L^1(\mathbb{R}^2), \qquad w_3 h \in L^\infty(\mathbb{R}^2). \label{h-condi-1}
\end{equation}
Let $(\boldsymbol{u}^\varepsilon, q^\varepsilon)$ be a solution of the 2D filtered Euler equations for $q_0 \in \mathcal{M}(\mathbb{R}^2) \cap H^{-1}_{\mathrm{loc}}(\mathbb{R}^2)$ with a distinguished sign and compact support. Then, for any $T > 0$, there exist subsequences $\{\boldsymbol{u}^{\varepsilon_j}\}$, $\{ q^{\varepsilon_j} \}$ and their limits $\boldsymbol{u} \in L^2_{\mathrm{loc}}(\mathbb{R}^2 \times [0,T])$, $q = \operatorname{curl} \boldsymbol{u} \in L^\infty([0,T];\mathcal{M}(\mathbb{R}^2))$ such that 
\begin{align*}
& q^{\varepsilon_j} \overset{\ast}{\rightharpoonup} q  \quad \mathrm{weakly}\ast \ \mathrm{in} \ L^\infty([0,T];\mathcal{M}(\mathbb{R}^2)), \\
& \boldsymbol{u}^{\varepsilon_j} \rightharpoonup \boldsymbol{u} \quad \mathrm{weakly} \ \mathrm{in} \ L^2_{\mathrm{loc}}(\mathbb{R}^2 \times [0,T])
\end{align*}
and $(\boldsymbol{u}, q)$ are a weak solution of the 2D Euler equations with initial vorticity $q_0$ in the sense of Definition~\ref{def-sol-velocity} and Definition~\ref{def-sol-vorticity} respectively.
}\label{convergence-thm1}
\end{theorem}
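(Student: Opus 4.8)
The plan is to follow the Delort--Majda--Schochet strategy for proving existence of vortex sheet solutions, but with the point vortices replaced by the filtered solutions $q^\varepsilon$, whose existence and basic estimates are supplied by \cite{G.}. The scheme has three main ingredients: (1) uniform a~priori bounds that give compactness of $\{q^{\varepsilon}\}$ and $\{\boldsymbol{u}^{\varepsilon}\}$; (2) a concentration-cancellation / uniform-decay estimate that controls the nonlinear term $W^\varepsilon_{NL}$ along the subsequence; and (3) passage to the limit in both the weak vorticity form and the weak velocity form, together with identification of the initial data.

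First I would collect the uniform bounds. Since $\|q^\varepsilon(\cdot,t)\|_{\mathcal M}=\|q_0\|_{\mathcal M}$ and the sign is preserved by the transport structure of \eqref{VFEE} (the flow map $\boldsymbol\eta^\varepsilon$ is measure-preserving), we have $\{q^\varepsilon\}$ bounded in $L^\infty([0,T];\mathcal M(\mathbb R^2))$ with fixed total mass and fixed sign; moreover the support of $q^\varepsilon(\cdot,t)$ stays in a fixed ball $B_R$ on $[0,T]$ because $\|\boldsymbol u^\varepsilon\|_{L^\infty}$ is controlled (the kernel $\boldsymbol K^\varepsilon = \boldsymbol K P_K(|\cdot|/\varepsilon)$ is bounded for each fixed $\varepsilon$, but one needs the transport speed bound to be $\varepsilon$-uniform; here one uses $P_K\le 1$ so $|\boldsymbol K^\varepsilon|\le|\boldsymbol K|$ and argues as in the unfiltered case via the $H^{-1}_{\mathrm{loc}}$ control of the energy). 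From Banach--Alaoglu we extract $q^{\varepsilon_j}\overset{*}{\rightharpoonup}q$. For the velocities, the condition $w_1 h\in L^1$ ensures $\boldsymbol K^\varepsilon\to\boldsymbol K$ in an appropriate sense and, more importantly, gives a uniform local kinetic-energy bound $\int_{B_R}|\boldsymbol u^\varepsilon|^2\,d\boldsymbol x\le C(R)$ coming from the $H^{-1}_{\mathrm{loc}}$ assumption on $q_0$ and conservation of the filtered pseudo-energy; hence $\boldsymbol u^{\varepsilon_j}\rightharpoonup\boldsymbol u$ weakly in $L^2_{\mathrm{loc}}$ after a further extraction, and $q=\operatorname{curl}\boldsymbol u$ follows by passing to the limit in the distributional identity $\operatorname{curl}\boldsymbol u^\varepsilon=q^\varepsilon$.

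The heart of the argument is the nonlinear term. Here I would prove the uniform-decay estimate for $q^\varepsilon$ in small disks,
\begin{equation*}
\sup_{0\le t\le T,\ \boldsymbol x_0\in\mathbb R^2}\int_{|\boldsymbol x-\boldsymbol x_0|\le r} q^\varepsilon(\boldsymbol x,t)\,d\boldsymbol x \le C(T)\Bigl[\log\tfrac1r\Bigr]^{-1/2},
\end{equation*}
uniformly in $\varepsilon$, by imitating the Majda pseudo-energy argument \cite{Majda,Majda-1}: the quantity $\iint G^\varepsilon(\boldsymbol x-\boldsymbol y)\,q^\varepsilon(\boldsymbol x)q^\varepsilon(\boldsymbol y)$ (or its truncation) is conserved (or controlled) along \eqref{VFEE}, the sign condition makes it coercive, and the key is that $G^\varepsilon=G\ast h^\varepsilon$ retains the logarithmic lower bound of $G$ on scales $\gg\varepsilon$ while the condition $w_3 h\in L^\infty$ controls the short-range discrepancy $G^\varepsilon-G$ (and its gradient, via \eqref{K^eps-KP}) so that the estimate is not destroyed as $\varepsilon\to0$. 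With this decay in hand, one runs the Delort concentration-cancellation argument: write $W^\varepsilon_{NL}(q^\varepsilon;\psi)=\iint H^\varepsilon_\psi q^\varepsilon q^\varepsilon$, split $H^\varepsilon_\psi$ near and far from the diagonal, use boundedness and off-diagonal convergence $H^\varepsilon_\psi\to H_\psi$ (which again needs $w_1 h\in L^1$ so that $\boldsymbol K^\varepsilon\to\boldsymbol K$ locally uniformly away from $0$), and use the uniform decay to show the diagonal contribution is negligible; the limit measure $q\otimes q$ then pairs against $H_\psi$ with no atomic part surviving, yielding $W^{\varepsilon_j}_{NL}(q^{\varepsilon_j};\psi)\to W_{NL}(q;\psi)$. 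The linear term $W_L$ passes to the limit trivially by weak-$*$ convergence.

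Finally, passing to the limit in Definition~\ref{def-sol-vorticity}(i) gives that $q$ solves the weak vorticity form; the equicontinuity in time needed for Definition~\ref{def-sol-vorticity}(ii) and Definition~\ref{def-sol-velocity}(ii) (hence $q(0)=q_0$, $\boldsymbol v(0)=\boldsymbol v_0$) follows from the uniform bound on $\partial_t q^\varepsilon$ in $H^{-L}_{\mathrm{loc}}$ read off from the equation, plus Aubin--Lions / Arzelà--Ascoli. To get the weak velocity form (Definition~\ref{def-sol-velocity}(i)) one either invokes the Schochet-type equivalence between the weak vorticity and weak velocity formulations for vortex sheet data with distinguished sign, or passes to the limit directly in the filtered momentum equation \eqref{REE} tested against divergence-free $\Psi$, using $\boldsymbol u^\varepsilon-\boldsymbol v^\varepsilon\to0$ in $L^2_{\mathrm{loc}}$ (again from $w_1 h\in L^1$) to replace $\boldsymbol u^\varepsilon$ by $\boldsymbol v^\varepsilon$ and handle the extra term $(\nabla\boldsymbol v^\varepsilon)^T\!\cdot\boldsymbol u^\varepsilon$. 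I expect the main obstacle to be step (2): establishing the $\varepsilon$-uniform logarithmic decay estimate from the filtered pseudo-energy, since it requires quantitative control of $G^\varepsilon$ both on large scales (logarithmic growth, to keep coercivity) and on scales of order $\varepsilon$ (to ensure the filtering does not spoil the bound), and it is precisely here that the hypotheses \eqref{h-condi-1} on $h$ must be used in an essential and sharp way.
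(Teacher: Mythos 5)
Your overall strategy coincides with the paper's: conservation of the pseudo-energy (\ref{pseudo_energy}) yields an $\varepsilon$-uniform logarithmic decay of the vorticity maximal function, which then drives a Delort--Majda concentration--cancellation argument in the weak vorticity form, followed by a separate passage to the limit in the weak velocity form. However, there is a concrete gap precisely at the step you call the heart of the argument: you invoke conservation (or control) of the pseudo-energy --- and, implicitly, of the total vorticity and of the second moment, which you need to handle the off-diagonal part of the energy --- directly for the measure-valued filtered solution $q^\varepsilon$ with initial data $q_0\in\mathcal{M}\cap H^{-1}_{\mathrm{loc}}$. These conservation laws are classical only for smooth solutions, and your proposal offers no justification for them at the level of measures. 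The paper circumvents this with a second regularization: it mollifies the initial data as in (\ref{smooth_ID}), proves the decay (\ref{VMF_approximate}) of Lemma~\ref{Lemm_VMF} for the smooth solutions $q^{\varepsilon,\delta}$ (where conservation of $Q$, $M$, $H^\varepsilon$ is available), checks that $Q_0^\delta$, $M_0^\delta$, $|H_0^{\varepsilon,\delta}|$ are bounded uniformly in $\varepsilon,\delta$ (using $q_0\in H^{-1}_{\mathrm{loc}}$, $\boldsymbol{v}_0\in L^2_{\mathrm{loc}}$, compact support and $w_1h\in L^1$), and only then lets $\delta\to0$, using uniqueness of the filtered solution to identify the limit with $q^\varepsilon$ and transferring the decay and the bound (\ref{est_int_log_q}) by a cut-off/weak-$\ast$ argument. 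The same $\delta$-approximation is reused in the velocity part (the $J_1+J_2+J_3$ splitting together with Lemma~\ref{L1-conv-v-eps}) to show $\int_0^T\int_{B_R}|\boldsymbol{u}^\varepsilon-\boldsymbol{v}^\varepsilon|\,d\boldsymbol{x}\,dt\to0$; note that the available kernel estimate $\|\boldsymbol{K}^\varepsilon-\boldsymbol{K}\|_{L^1}\le\varepsilon\|w_1h\|_{L^1}$ gives $L^1_{\mathrm{loc}}$ (not the $L^2_{\mathrm{loc}}$ you claim) convergence of $\boldsymbol{u}^\varepsilon-\boldsymbol{v}^\varepsilon$, which suffices since it is paired with smooth test functions.

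Two further points. First, your assertion that $\mathrm{supp}\,q^\varepsilon(\cdot,t)$ stays in a fixed ball because $\|\boldsymbol{u}^\varepsilon\|_{L^\infty}$ is $\varepsilon$-uniformly controlled is not tenable: $\|\boldsymbol{K}^\varepsilon\|_{L^\infty}=O(1/\varepsilon)$ and vortex-sheet velocities are not uniformly bounded; the paper never uses confinement of the support at positive times, but instead bounds the far-field part of the pseudo-energy via (\ref{est-G^eps-near}) and the conserved second moment (with $M_0^\delta\le\overline{R}^2Q_0$ at time zero). Second, the decay estimate itself uses only positivity of $h$ and $w_1h\in L^1$, through (\ref{est-G^eps-far}); the hypothesis $w_3h\in L^\infty$ is not needed to control $G^\varepsilon-G$ there, but enters later, in the consistency error $W_1$ of the weak velocity form (the estimate of $W_{111}$). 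These misattributions are repairable, but as written the central $\varepsilon$-uniform decay rests on conservation laws for measure-valued solutions that you have not established, and this is exactly the gap the paper's double ($\varepsilon,\delta$) approximation is designed to close.
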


The proofs of Theorem~\ref{convergence-thm1} are based on the Delort's proof in \cite{Delort} and its simple proof given by \cite{Majda}, in which a uniform estimate for the decay of the local circulation plays crucial roles.

In addition, we consider a point-vortex approximation of the vortex sheet initial vorticity $q_0 \in \mathcal{M}(\mathbb{R}^2) \cap H^{-1}_{\mathrm{loc}}(\mathbb{R}^2)$. We begin with introducing the filtered point-vortex system \cite{G.2,Liu}. Let $\{ S_n \}$ be non-overlapping squares covering the support of $q_0$ with side lengths $\eta$ and centered at $\boldsymbol{c}_n = \boldsymbol{j}_n \eta$, where $\boldsymbol{j}_n \in\mathbb{Z}^2$. The initial circulation of $q_0$ in $S_n$ is defined by
\begin{equation*}
\Gamma_n = \int_{S_n} q_0(\boldsymbol{x}) d\boldsymbol{x}.
\end{equation*}
In the vortex method, the initial vortex sheet $q_0$ is approximated by point vortices as follows.
\begin{equation}
\widehat{q}_0(\boldsymbol{x}) = \sum_{n \in \mathbb{N}} \Gamma_n \delta(\boldsymbol{x} - \boldsymbol{c}_n). \label{init-pv}
\end{equation}
As discussed in \cite{G.2}, the solution of the 2D filtered Euler equations for initial vorticity (\ref{init-pv}) is expressed by
\begin{equation}
q^\varepsilon(\boldsymbol{x}, t) = \sum_{n \in \mathbb{N}} \Gamma_n \delta(\boldsymbol{x} - \boldsymbol{x}^\varepsilon_n(t)), \label{sol-pv}
\end{equation}
in which the supports $\boldsymbol{x}^\varepsilon_n(t)$ are governed by the following Hamiltonian system,
\begin{equation}
\frac{\mbox{d}}{\mbox{d}t} \boldsymbol{x}^\varepsilon_n(t) = \sum_{m \neq n} \Gamma_m \boldsymbol{K}^\varepsilon \left( \boldsymbol{x}^\varepsilon_n(t) - \boldsymbol{x}^\varepsilon_m(t) \right), \qquad \boldsymbol{x}^\varepsilon_n(0) = \boldsymbol{c}_n. \label{FPV}
\end{equation}
Note that the filtered velocity field of (\ref{sol-pv}) is given by
\begin{equation}
\boldsymbol{u}^\varepsilon(\boldsymbol{x}, t) = \sum_{n \in \mathbb{N}} \Gamma_n \boldsymbol{K}^\varepsilon \left( \boldsymbol{x} - \boldsymbol{x}^\varepsilon_n(t) \right). \label{velo-pv}
\end{equation}
Regarding the point-portex method and the vortex blob method, the convergences of approximated solutions have been shown for smooth solutions to the Euler equations, see \cite{Beale,Goodman,Hald}. The convergence result is extended to the case of vortex sheets: it has been shown in \cite{Liu} that vortex sheet solutions with a distinguished sign converge to weak solutions constructed in \cite{Delort}. Similar to the preceding result in \cite{Liu}, we show that approximated vortex sheet solutions by the filtered point-vortex system converge to weak solutions of the 2D Euler equations by applying the proof of Theorem~\ref{convergence-thm1}. The following result seems to be the same as that shown in \cite{Liu}, but the assumption for the smoothing function is different. We remark that our result allows the smoothing function $h$ to have a singularity at the origin so that the theorem can be applied to the Euler-$\alpha$ model, while the preceding results suppose the boundedness of $h$. This is an important extension since the Euler-$\alpha$ model is closer to the unregularized flow in the sense of the smoothing function \cite{Holm(c)}.

\begin{theorem}
{\it  Suppose that $h$ satisfies the assumption of Theorem~\ref{convergence-thm1} and $q_0 \in \mathcal{M}(\mathbb{R}^2) \cap H^{-1}_{\mathrm{loc}}(\mathbb{R}^2)$ has a distinguished sign and compact support. Let $(\boldsymbol{u}^\varepsilon, q^\varepsilon)$ be the approximate solution of the 2D filtered Euler equations generated by the vortex method as described above. Then, for any $T > 0$, there exist subsequences $\{\boldsymbol{u}^{\varepsilon_j}\}$, $\{ q^{\varepsilon_j} \}$ and their limits $\boldsymbol{u} \in L^2_{\mathrm{loc}}(\mathbb{R}^2 \times [0,T])$, $q = \operatorname{curl} \boldsymbol{u} \in L^\infty([0,T];\mathcal{M}(\mathbb{R}^2))$ such that 
\begin{align*}
& q^{\varepsilon_j} \overset{\ast}{\rightharpoonup} q  \quad \mathrm{weakly}\ast \ \mathrm{in} \ L^\infty([0,T];\mathcal{M}(\mathbb{R}^2)), \\
& \boldsymbol{u}^{\varepsilon_j} \rightharpoonup \boldsymbol{u} \quad \mathrm{weakly} \ \mathrm{in} \ L^2_{\mathrm{loc}}(\mathbb{R}^2 \times [0,T])
\end{align*}
and $(\boldsymbol{u}, q)$ are a weak solution of the 2D Euler equations with initial vorticity $q_0$ in the sense of Definition~\ref{def-sol-velocity} and Definition~\ref{def-sol-vorticity} respectively, provided that the grid size $\eta$ satisfies $\eta \leq c \varepsilon$ with some constant $c >0$.
}\label{convergence-thm2}
\end{theorem}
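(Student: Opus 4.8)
The plan is to reduce Theorem~\ref{convergence-thm2} to a careful reexamination of the proof of Theorem~\ref{convergence-thm1}, treating the point-vortex approximation $\widehat{q}_0$ as a perturbation of $q_0$ that is controlled by the grid size $\eta$. The key observation is that the solution \eqref{sol-pv} of the filtered point-vortex system is itself an exact weak solution of the 2D filtered Euler equations with initial data $\widehat{q}_0 \in \mathcal{M}(\mathbb{R}^2)$, so all the a priori bounds established for Theorem~\ref{convergence-thm1} --- the uniform mass bound $\|q^\varepsilon(\cdot,t)\|_{\mathcal M} = \|\widehat{q}_0\|_{\mathcal M} \le \|q_0\|_{\mathcal M}$, the uniform local energy bound, and the uniform decay of local circulation in small disks --- carry over, provided they are stated in terms of quantities that are stable under the approximation $q_0 \mapsto \widehat q_0$. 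The only genuinely new ingredient is to verify that these quantities, in particular the pseudo-energy and the local circulation decay rate, remain uniformly bounded when we pass from $q_0$ to $\widehat q_0$ with $\eta \le c\varepsilon$, and that $\widehat q_0 \rightharpoonup q_0$ weakly-$\ast$ in $\mathcal M(\mathbb{R}^2)$ as $\eta \to 0$ with the distinguished sign preserved.

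The steps I would carry out are as follows. First, I would record that $\widehat q_0$ has a distinguished sign (each $\Gamma_n$ has the same sign as $q_0$ since $q_0$ does), that $\|\widehat q_0\|_{\mathcal M} = \|q_0\|_{\mathcal M}$, and that $\operatorname{supp}\widehat q_0$ is contained in a fixed compact neighborhood of $\operatorname{supp} q_0$. Second, I would establish the convergence $\widehat q_0 \rightharpoonup q_0$ in $\mathcal M(\mathbb{R}^2)$: for $\psi \in C_c(\mathbb R^2)$ one has $\int \psi \, d\widehat q_0 - \int \psi \, dq_0 = \sum_n \int_{S_n}(\psi(\boldsymbol c_n) - \psi(\boldsymbol x))\,q_0(\boldsymbol x)\,d\boldsymbol x$, which is bounded by the modulus of continuity of $\psi$ at scale $\eta$ times $\|q_0\|_{\mathcal M}$, hence vanishes as $\eta \to 0$. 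Third --- the crucial point --- I would show that the local circulation decay estimate used in the proof of Theorem~\ref{convergence-thm1}, of the form $\sup_{t,\boldsymbol x_0}\int_{|\boldsymbol x - \boldsymbol x_0|\le r} q^\varepsilon(\boldsymbol x,t)\,d\boldsymbol x \le c(T)[\log(1/r)]^{-1/2}$ for $r$ not too small relative to $\varepsilon$, holds for the point-vortex solution \eqref{sol-pv} with a constant independent of $\eta$ and $\varepsilon$; this is where the hypothesis $\eta \le c\varepsilon$ enters, since at scales $r \lesssim \eta$ the discrete initial data cannot reproduce the continuum decay, but the constraint $\eta \le c\varepsilon$ ensures the filtered kernel $\boldsymbol K^\varepsilon$ does not see structure below scale $\varepsilon$, so the relevant estimates only need to hold down to scale $\varepsilon$, which is compatible with $\eta$. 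The pseudo-energy bound would be handled similarly: the filtered kinetic energy of $\widehat q_0$ is comparable to that of $q_0$ up to errors controlled by $\eta/\varepsilon$, using that $G^\varepsilon$ is quasi-Lipschitz at scale $\varepsilon$. Fourth, with these uniform bounds in hand, I would invoke verbatim the compactness and limit-passage argument from the proof of Theorem~\ref{convergence-thm1}: extract weakly-$\ast$ convergent subsequences $q^{\varepsilon_j}\overset{\ast}{\rightharpoonup} q$ and $\boldsymbol u^{\varepsilon_j}\rightharpoonup \boldsymbol u$, use the uniform local circulation decay together with the Delort--Majda concentration-cancellation mechanism to pass to the limit in the nonlinear term $W_{NL}^\varepsilon$, and verify that the limit satisfies the weak vorticity and weak velocity formulations with initial data $q_0$ (here using step two to identify the initial condition, since the initial data now varies with $\varepsilon$ through $\eta = \eta(\varepsilon)$).

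I expect the main obstacle to be precisely the uniform-in-$\eta$ local circulation decay estimate, i.e.\ showing that replacing $q_0$ by its point-vortex discretization $\widehat q_0$ does not destroy the $[\log(1/r)]^{-1/2}$ decay rate that drives concentration-cancellation. The subtlety is twofold: one must propagate the estimate in time along the flow of the singular-but-bounded filtered velocity field \eqref{velo-pv}, and one must handle the regime where the test radius $r$ is comparable to or smaller than the grid size $\eta$, where the discrete data is genuinely atomic. The condition $\eta \le c\varepsilon$ is the mechanism that circumvents this: because $\boldsymbol K^\varepsilon$ and $G^\varepsilon$ are Lipschitz at scales below $\varepsilon$, the relevant energy/circulation functionals are insensitive to how mass is distributed within a single $\varepsilon$-ball, so discretizing at a finer scale $\eta \le c\varepsilon$ is harmless. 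Making this quantitative --- controlling the difference between the pseudo-energy of the continuum and discretized problems by $C\eta/\varepsilon \cdot \|q_0\|_{\mathcal M}^2$ and feeding it into the circulation estimate --- is the technical heart of the argument, and everything else is a transcription of the proof of Theorem~\ref{convergence-thm1}.
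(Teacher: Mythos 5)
Your proposal follows essentially the same route as the paper: the paper also reduces everything to the uniform decay of the (now discrete) vorticity maximal function, bounds the conserved point-vortex Hamiltonian $\mathscr{H}^\varepsilon(0)$ by comparing it with the continuum pseudo-energy $H_0^\varepsilon$ with error $c\,\eta/\varepsilon\,\|q_0\|_{\mathcal{M}}^2$ (using $\|\nabla G^\varepsilon\|_{L^\infty}=\|\boldsymbol{K}^\varepsilon\|_{L^\infty}\le c/\varepsilon$ and $\eta\le c\varepsilon$), and then derives the $[\log(1/(2r+\varepsilon))]^{-1/2}$ decay from the conservation of $\mathscr{H}^\varepsilon$ and of the second moment, after which the limit passage is a verbatim repetition of the proof of Theorem~\ref{convergence-thm1}. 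Your identification of the pseudo-energy comparison as the technical heart, and of the $\varepsilon$-scale Lipschitz property of $G^\varepsilon$ as the mechanism making the discretization harmless, is exactly the paper's argument.
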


According to Theorem~\ref{convergence-thm2}, the condition (\ref{h-condi-1}) describes the decay rate of $h$ for the convergence. On the other hand, the rate of the singularity at the origin is estimated by the solvability of the 2D filtered Euler equations, which is discussed in \cite{G.}.

\label{Main}

\section{Approximate-solution sequence}
\subsection{Vorticity maximal function}
In this section, we introduce an approximate-solution sequence of the 2D filtered Euler equations by smoothing initial data and show a decay property of the circulation. For a given vorticity $q$, the circulation inside a circle with center $\boldsymbol{x}_0 \in \mathbb{R}^2$ and radius $r$ is given by 
\begin{equation*}
\int_{|\boldsymbol{x} - \boldsymbol{x}_0|\leq r} q(\boldsymbol{x}, t) d\boldsymbol{x}.
\end{equation*}
Following \cite{Diperna}, we call the function,
\begin{equation*}
M(r, q) \equiv \sup_{0\leq t \leq T, \boldsymbol{x}_0 \in\mathbb{R}^2 } \int_{|\boldsymbol{x} - \boldsymbol{x}_0|\leq r} q(\boldsymbol{x}, t) d\boldsymbol{x},
\end{equation*}
the {\it vorticity maximal function} for $q$. We now show that the vorticity maximal function for the approximate-solution sequence of the 2D filtered Euler equations uniformly decays with a sufficiently rapid rate as $r \rightarrow \infty$. 

Let $\boldsymbol{v}_0 \in L^2_{\mathrm{loc}}(\mathbb{R}^2)$ and $q_0 = \operatorname{curl} \boldsymbol{v}_0 \in \mathcal{M}(\mathbb{R}^2) \cap H^{-1}_{\mathrm{loc}}(\mathbb{R}^2)$ be vortex sheet initial data and we assume $q_0 \geq 0$ without loss of generality. We define smoothed initial data by
\begin{equation}
\boldsymbol{v}_0^\delta = \phi^\delta \ast \boldsymbol{v}_0, \qquad q_0^{\delta} = \operatorname{curl} \boldsymbol{v}_0^\delta, \label{smooth_ID}
\end{equation} 
where $\phi^\delta(\boldsymbol{x}) = \delta^{-2} \phi(\boldsymbol{x} / \delta)$ is a standard mollifier with $\phi \in C_c^\infty(\mathbb{R}^2)$ satisfying $\phi \geq 0$, supp $\phi \subset B_1$ and $\int_{\mathbb{R}^2} \phi(\boldsymbol{x}) d\boldsymbol{x} = 1$. Here, we used the notation $B_r = \{ \boldsymbol{x}\in \mathbb{R}^2 \ | \ |\boldsymbol{x}| < r \}$. Then, we easily find $\boldsymbol{v}_0^\delta \in C^\infty(\mathbb{R}^2)$ and $q_0^\delta \in C_c^\infty(\mathbb{R}^2)$ with supp $q_0^\delta \subset B_{\overline{R}}$ for some $\overline{R} > 0$. In addition, $\boldsymbol{v}_0^\delta$ is uniformly bounded in $L^2_{\mathrm{loc}}(\mathbb{R}^2)$, since we have 
\begin{align*}
\| \boldsymbol{v}_0^\delta \|^2_{L^2(B_R)} &= \int_{|\boldsymbol{x}| < R} \left| \int_{\mathbb{R}^2} \phi_\delta(\boldsymbol{x} - \boldsymbol{y}) \boldsymbol{v}_0(\boldsymbol{y}) d\boldsymbol{y} \right|^2 d\boldsymbol{x} \\
&\leq \int_{|\boldsymbol{x}| < R} \left(\int_{\mathbb{R}^2} \phi_\delta(\boldsymbol{x} - \boldsymbol{y}) d\boldsymbol{y} \right) \left( \int_{\mathbb{R}^2} \phi_\delta(\boldsymbol{x} - \boldsymbol{y}) \left| \boldsymbol{v}_0(\boldsymbol{y}) \right|^2  d\boldsymbol{y} \right) d\boldsymbol{x} \\
&\leq \| \boldsymbol{v}_0 \|^2_{L^2(B_{R+1})},
\end{align*}
for any $R >0$. Note that $q_0^\delta = \phi^\delta \ast q_0$ is positive and it follows from $q_0 \in \mathcal{M}(\mathbb{R}^2) \cap H^{-1}_{\mathrm{loc}}(\mathbb{R}^2)$ that $\| q^\delta_0 \|_{L^1} \leq \| q_0 \|_{\mathcal{M}}$ and $\| q^\delta_0 \|_{H^{-1}_{\mathrm{loc}}} \leq \| q_0 \|_{H^{-1}_{\mathrm{loc}}}$. Then, we find that $\boldsymbol{v}_0^\delta$ converges strongly to $\boldsymbol{v}_0$ in $L^2_{\mathrm{loc}}(\mathbb{R}^2)$ and $q^\delta$ converges strongly to $q_0$ in $H^{-1}_{\mathrm{loc}}(\mathbb{R}^2)$.

According to the classical argument for the existence of a smooth solution to the 2D Euler equations \cite{McGrath}, we can obtain a smooth solution of the 2D filtered Euler equations with initial data (\ref{smooth_ID}). Similarly to the 2D Euler equations, smooth solutions of the 2D filtered Euler equations preserve the total vorticity and the second moment of vorticity, which are given by
\begin{align*}
Q(t) &= \int_{\mathbb{R}^2} q(\boldsymbol{x}, t) d\boldsymbol{x}, \\
M(t) &= \int_{\mathbb{R}^2} |\boldsymbol{x}|^2 q(\boldsymbol{x}, t) d\boldsymbol{x},
\end{align*}
respectively. If the initial energy is finite,
\begin{equation*}
E(t) = \int_{\mathbb{R}^2} |\boldsymbol{v}(\boldsymbol{x}, t)|^2 d\boldsymbol{x}
\end{equation*}
is also conserved. However, the smooth initial velocity constructed by (\ref{smooth_ID}) does not belongs to $L^2(\mathbb{R}^2)$ in general. For the case of solutions with infinite energy, we consider the following conserved quantity called the {\it pseudo-energy}.
\begin{equation}
H^{\varepsilon}(t) = H^\varepsilon(q(\boldsymbol{x}, t)) \equiv - \int_{\mathbb{R}^2} \int_{\mathbb{R}^2} G^\varepsilon(\boldsymbol{x} - \boldsymbol{y}) q(\boldsymbol{x}, t) q(\boldsymbol{y}, t) d\boldsymbol{y} d\boldsymbol{x}. \label{pseudo_energy}
\end{equation}
Note that we have $E(t) = H^\varepsilon(t)$ for solutions with globally finite energy. Using conserved quantities, we obtain a decay estimate of the vorticity maximal function for the approximate-solution sequence of the 2D filtered Euler equations.

\begin{lemma}
{\it
Let $q_0^\delta$ be a smooth initial vorticity satisfying
\begin{align*}
q_0^\delta &\geq 0, \\
Q^\delta_0 &\equiv \int_{\mathbb{R}^2} q_0^\delta(\boldsymbol{x}) d\boldsymbol{x} \leq Q_0 < \infty, \\
M^\delta_0 &\equiv \int_{\mathbb{R}^2} \left| \boldsymbol{x} \right|^2 q_0^\delta(\boldsymbol{x}) d\boldsymbol{x} \leq M_0 < \infty, \\
|H^{\varepsilon,\delta}_0| &\equiv |H^\varepsilon(q^\delta_0)| \leq H_0 < \infty.
\end{align*}
Then, there exists a constant $c = c(Q_0, M_0, H_0) > 0$ such that, for any $0 <r \leq 1/4$ and $T > 0$, we have
\begin{equation}
M(r, q^{\varepsilon, \delta}) < c \left[ \log{\left( \frac{1}{2 r + \varepsilon}\right)} \right]^{- 1/2}, \label{VMF_approximate}
\end{equation}
where $q^{\varepsilon, \delta}$ is a solution of the 2D filtered Euler equations with $q^{\varepsilon, \delta}(0) = q^\delta_0$.
} \label{Lemm_VMF}
\end{lemma}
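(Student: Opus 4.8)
The plan is to adapt the Majda--Bertozzi argument (as in \cite{Majda,Majda-1}) for the decay of the local circulation, replacing the Newtonian kernel $G$ and energy $E$ by the filtered kernel $G^\varepsilon$ and the pseudo-energy $H^\varepsilon$, and then to track the $\varepsilon$-dependence carefully so that the constant in \eqref{VMF_approximate} is uniform in both $\delta$ and $\varepsilon$. First I would fix $\boldsymbol{x}_0$, $r$, and a time $t$, abbreviate $q = q^{\varepsilon,\delta}(\cdot,t) \geq 0$, and write $m(r) = \int_{|\boldsymbol{x}-\boldsymbol{x}_0|\le r} q\,d\boldsymbol{x}$. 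The basic mechanism is that the pseudo-energy controls how much mass can pile up: since $-G^\varepsilon(\boldsymbol{z}) = -G_r^{\varepsilon}(|\boldsymbol{z}|)$ is, up to a bounded additive correction coming from the tail $P_K \to 1$, comparable to $-\frac{1}{2\pi}\log|\boldsymbol{z}|$ for $|\boldsymbol{z}| \lesssim \varepsilon$ and to $-\frac{1}{2\pi}\log(\varepsilon)$ (i.e. essentially constant) for $|\boldsymbol{z}| \lesssim \varepsilon$, one gets a lower bound of the form
\begin{equation*}
H^\varepsilon(q) \ \geq\ \frac{1}{2\pi}\,m(r)^2 \,\log\!\left(\frac{1}{2r+\varepsilon}\right) \ -\ (\text{correction terms}).
\end{equation*}
The key point is that restricting the double integral defining $H^\varepsilon$ to $\{|\boldsymbol{x}-\boldsymbol{x}_0|\le r\}\times\{|\boldsymbol{y}-\boldsymbol{x}_0|\le r\}$, where $|\boldsymbol{x}-\boldsymbol{y}|\le 2r$, forces $-G^\varepsilon(\boldsymbol{x}-\boldsymbol{y}) \ge -G_r^\varepsilon(2r) \gtrsim \log\frac{1}{2r+\varepsilon}$; this uses monotonicity of $-G_r^\varepsilon$ and the comparison in \eqref{K^eps-KP}, which is where the hypothesis $w_3 h \in L^\infty$ and $w_1 h \in L^1$ enter (they guarantee $G^\varepsilon$ has the right logarithmic profile and bounded corrections, uniformly in $\varepsilon$ after rescaling $\boldsymbol{x}\mapsto\boldsymbol{x}/\varepsilon$).

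Next I would bound the correction terms. The contribution from pairs with $|\boldsymbol{x}-\boldsymbol{y}|$ large is where $-G^\varepsilon$ becomes positive-unbounded (it grows like $\log|\boldsymbol{x}-\boldsymbol{y}|$), so one cannot simply throw it away; instead one uses the standard splitting $-G^\varepsilon(\boldsymbol{x}-\boldsymbol{y}) = -G^\varepsilon(\boldsymbol{x}-\boldsymbol{y})\chi_{|\boldsymbol{x}-\boldsymbol{y}|\le 1} - G^\varepsilon(\boldsymbol{x}-\boldsymbol{y})\chi_{|\boldsymbol{x}-\boldsymbol{y}|>1}$ and controls the far part by $\log(1+|\boldsymbol{x}|) + \log(1+|\boldsymbol{y}|)$, which after integration against $q$ is dominated by $Q(t) = Q_0^\delta \le Q_0$ and $M(t) = M_0^\delta \le M_0$ (these are conserved for the smooth filtered solution, exactly as stated before the lemma; here I use that $-G^\varepsilon \le -G + C$ has the same large-distance behaviour as $-G$ since $P_K \le 1$). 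Combining, one obtains
\begin{equation*}
\frac{1}{2\pi}\,m(r)^2\,\log\!\left(\frac{1}{2r+\varepsilon}\right) \ \leq\ H_0 + C(Q_0,M_0),
\end{equation*}
and since $m(r) = m(r,\boldsymbol{x}_0,t)$ was arbitrary, taking the supremum over $\boldsymbol{x}_0$ and $0\le t\le T$ gives $M(r,q^{\varepsilon,\delta})^2 \le c^2/\log\frac{1}{2r+\varepsilon}$, i.e. \eqref{VMF_approximate}. The restriction $r \le 1/4$ just ensures $2r + \varepsilon < 1$ (for $\varepsilon$ small) so the logarithm is positive; for larger $\varepsilon$ one absorbs the bounded discrepancy into $c$.

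The main obstacle I anticipate is the uniformity in $\varepsilon$ of the comparison between $G^\varepsilon$ and $G$: one must show, from \eqref{h-condi-1} alone and the scaling $G^\varepsilon = G\ast h^\varepsilon$, that $-G_r^\varepsilon(\rho) \ge \frac{1}{2\pi}\log\frac{1}{\rho+\varepsilon} - C$ for all $\rho>0$ with $C$ independent of $\varepsilon$, and simultaneously $-G^\varepsilon(\boldsymbol{z}) \le -G(\boldsymbol{z}) + C$. Writing $-G^\varepsilon(\boldsymbol{z}) = -\frac{1}{2\pi}\int h(\boldsymbol{w})\log|\boldsymbol{z}-\varepsilon\boldsymbol{w}|\,d\boldsymbol{w}$, the lower bound needs $\int h(\boldsymbol{w})\log^+(|\boldsymbol{z}|/\varepsilon + |\boldsymbol{w}|)\,d\boldsymbol{w} < \infty$ uniformly, which follows from $w_1 h \in L^1$ (so that $\int h(\boldsymbol{w})\log(1+|\boldsymbol{w}|)\,d\boldsymbol{w}<\infty$, in fact $\int h\,|\boldsymbol w|\,d\boldsymbol w<\infty$ suffices); the near-diagonal behaviour and the bound $-G^\varepsilon \le -G + C$ for small $|\boldsymbol{z}|$ is where the singularity of $h$ at the origin must be mild enough — this is exactly guaranteed by $w_3 h \in L^\infty$ together with the standing assumption that the filtered equations are well-posed (equivalently, $\boldsymbol K^\varepsilon = \boldsymbol K P_K(|\boldsymbol x|/\varepsilon)$ is quasi-Lipschitz and bounded, $P_K$ monotone with $P_K(0)=0$, $P_K(\infty)=1$, as recalled after \eqref{K^eps-KP}). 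Once this kernel comparison is established with $\varepsilon$-uniform constants, the rest is the routine energy-partition estimate.
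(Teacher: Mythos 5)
Your plan follows essentially the same route as the paper's proof: conservation of the pseudo-energy $H^\varepsilon$, a near/far splitting of $G^\varepsilon$ with the $\varepsilon$-uniform bounds $-G^\varepsilon(\boldsymbol{z}) \geq c\,\log\frac{1}{|\boldsymbol{z}|+\varepsilon} - c$ near the diagonal and a growth bound at large separations controlled by the conserved quantities $Q$ and $M$, and then a restriction of the double integral to ball pairs yielding $m(r)^2 \log\frac{1}{2r+\varepsilon} \leq c(Q_0,M_0,H_0)$. One small correction: the kernel estimates for this lemma require only $h \geq 0$, $h \in L^1$ and $w_1 h \in L^1$; the condition $w_3 h \in L^\infty$ plays no role here (it is used later, in the convergence of the velocity field), so your attribution of the near-diagonal comparison to it is unnecessary.
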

\begin{proof}
Consider the following decomposition of the pseudo-energy,
\begin{align*}
H^{\varepsilon, \delta}(t) &= - \iint_{|\boldsymbol{x} - \boldsymbol{y}| > 1/2} G^\varepsilon(\boldsymbol{x} - \boldsymbol{y}) q^{\varepsilon, \delta}(\boldsymbol{x}, t) q^{\varepsilon, \delta}(\boldsymbol{y}, t) d\boldsymbol{y} d\boldsymbol{x} \\
& \quad - \iint_{|\boldsymbol{x} - \boldsymbol{y}| \leq 1/2} G^\varepsilon(\boldsymbol{x} - \boldsymbol{y}) q^{\varepsilon, \delta}(\boldsymbol{x}, t) q^{\varepsilon, \delta}(\boldsymbol{y}, t) d\boldsymbol{y} d\boldsymbol{x},
\end{align*}
which gives
\begin{align}
&- \iint_{|\boldsymbol{x} - \boldsymbol{y}| \leq 1/2} G^\varepsilon(\boldsymbol{x} - \boldsymbol{y}) q^{\varepsilon, \delta}(\boldsymbol{x}, t) q^{\varepsilon, \delta}(\boldsymbol{y}, t) d\boldsymbol{y} d\boldsymbol{x}  \nonumber \\
&\leq \left| H_0^{\varepsilon, \delta} \right| + \iint_{|\boldsymbol{x} - \boldsymbol{y}| > 1/2} G^\varepsilon(\boldsymbol{x} - \boldsymbol{y}) q^{\varepsilon, \delta}(\boldsymbol{x}, t) q^{\varepsilon, \delta}(\boldsymbol{y}, t) d\boldsymbol{y} d\boldsymbol{x}. \label{est-pseudo_energy}
\end{align}
We estimate the function $G^\varepsilon = G \ast h^\varepsilon$. Note that
\begin{align*}
G^\varepsilon(\boldsymbol{x}) &= \frac{1}{2 \pi} \int_{\mathbb{R}^2} \left( \log{|\boldsymbol{x} - \boldsymbol{y}|} \right) h^\varepsilon(\boldsymbol{y}) d\boldsymbol{y} = \frac{1}{2 \pi} \int_{\mathbb{R}^2} \left( \log{|\boldsymbol{x} - \varepsilon \boldsymbol{y}|} \right) h(\boldsymbol{y}) d\boldsymbol{y} \\
&= \frac{1}{2 \pi} \left[ \int_{|\boldsymbol{y}| \leq 1 } \left( \log{|\boldsymbol{x} - \varepsilon \boldsymbol{y}|} \right) h(\boldsymbol{y}) d\boldsymbol{y} + \int_{|\boldsymbol{y}| > 1 } \left( \log{|\boldsymbol{x} - \varepsilon \boldsymbol{y}|} \right) h(\boldsymbol{y}) d\boldsymbol{y} \right] \\
&\leq \frac{1}{2 \pi} \left[ \log{\left(|\boldsymbol{x}| + \varepsilon \right)} \int_{|\boldsymbol{y}| \leq 1} h(\boldsymbol{y}) d\boldsymbol{y} + \int_{\mathbb{R}^2} \left(|\boldsymbol{x}| + \varepsilon |\boldsymbol{y}| \right) h(\boldsymbol{y}) d\boldsymbol{y} \right].
\end{align*}
Then, for $|\boldsymbol{x}| > 1/2$, it follows that
\begin{align}
G^\varepsilon(\boldsymbol{x}) &\leq c \left[ |\boldsymbol{x}|^2 \int_{\mathbb{R}^2} h(\boldsymbol{y}) d\boldsymbol{y} + \varepsilon \int_{\mathbb{R}^2} |\boldsymbol{y}| h(\boldsymbol{y}) d\boldsymbol{y} \right] \nonumber \\
&\leq c ( |\boldsymbol{x}|^2 + 1 ). \label{est-G^eps-near}
\end{align}
For the case of $|\boldsymbol{x}| \leq 1/2$, since $h$ is positive, it follows that
\begin{align}
G^\varepsilon(\boldsymbol{x}) \leq c \left[ - \log{\left( \frac{1}{|\boldsymbol{x}| + \varepsilon} \right)} + 1 \right], \label{est-G^eps-far}
\end{align}
that is, 
\begin{equation*}
\log{\left( \frac{1}{|\boldsymbol{x}| + \varepsilon} \right)} \leq c \left( - G^\varepsilon(\boldsymbol{x}) + 1 \right).
\end{equation*}
With the conserved quantities, the estimate (\ref{est-G^eps-near}) yields
\begin{align*}
&\iint_{|\boldsymbol{x} - \boldsymbol{y}| > 1/2} G^\varepsilon(\boldsymbol{x} - \boldsymbol{y}) q^{\varepsilon, \delta}(\boldsymbol{x}, t) q^{\varepsilon, \delta}(\boldsymbol{y}, t) d\boldsymbol{y} d\boldsymbol{x} \\
&\leq c \iint_{|\boldsymbol{x} - \boldsymbol{y}| > 1/2} \left( |\boldsymbol{x} - \boldsymbol{y}|^2 + \varepsilon \right) q^{\varepsilon, \delta}(\boldsymbol{x}, t) q^{\varepsilon, \delta}(\boldsymbol{y}, t) d\boldsymbol{y} d\boldsymbol{x} \\
&\leq c \left[ \int_{\mathbb{R}^2} \int_{\mathbb{R}^2} \left( |\boldsymbol{x}|^2 + |\boldsymbol{y}|^2 \right) q^{\varepsilon, \delta}(\boldsymbol{x}, t) q^{\varepsilon, \delta}(\boldsymbol{y}, t) d\boldsymbol{y} d\boldsymbol{x} + \varepsilon \left( Q^\delta_0 \right)^2 \right] \\
&\leq c Q^\delta_0 \left( M^\delta_0 + \varepsilon Q^\delta_0 \right),
\end{align*}
and (\ref{est-G^eps-far}) does
\begin{align*}
&\iint_{|\boldsymbol{x} - \boldsymbol{y}| \leq 1/2} \log{\left( \frac{1}{|\boldsymbol{x} - \boldsymbol{y}| + \varepsilon} \right)} q^{\varepsilon, \delta}(\boldsymbol{x}, t) q^{\varepsilon, \delta}(\boldsymbol{y}, t) d\boldsymbol{y} d\boldsymbol{x} \\
&\leq c \left[ - \iint_{|\boldsymbol{x} - \boldsymbol{y}| \leq 1/2} G^\varepsilon(\boldsymbol{x} - \boldsymbol{y}) q^{\varepsilon, \delta}(\boldsymbol{x}, t) q^{\varepsilon, \delta}(\boldsymbol{y}, t) d\boldsymbol{y} d\boldsymbol{x} + \left( Q^\delta_0 \right)^2 \right].
\end{align*}
Thus, it follows from (\ref{est-pseudo_energy}) that
\begin{equation}
\iint_{|\boldsymbol{x} - \boldsymbol{y}| \leq 1/2} \log{\left( \frac{1}{|\boldsymbol{x} - \boldsymbol{y}| + \varepsilon} \right)} q^{\varepsilon, \delta}(\boldsymbol{x}, t) q^{\varepsilon, \delta}(\boldsymbol{y}, t) d\boldsymbol{y} d\boldsymbol{x}  \leq H_0 + c Q_0 \left( M_0 + Q_0 \right). \label{log-q-q-est}
\end{equation}
Considering the estimate,
\begin{align*}
&\iint_{|\boldsymbol{x} - \boldsymbol{y}| \leq r} \log{\left( \frac{1}{|\boldsymbol{x} - \boldsymbol{y}| + \varepsilon} \right)} q^{\varepsilon, \delta}(\boldsymbol{x}, t) q^{\varepsilon, \delta}(\boldsymbol{y}, t) d\boldsymbol{y} d\boldsymbol{x} \\
&\geq \log{\left( \frac{1}{r + \varepsilon} \right)} \int_{|\boldsymbol{x} - \boldsymbol{x}_0| \leq r/2} \int_{|\boldsymbol{y} - \boldsymbol{x}_0| \leq r/2}  q^{\varepsilon, \delta}(\boldsymbol{x}, t) q^{\varepsilon, \delta}(\boldsymbol{y}, t) d\boldsymbol{y} d\boldsymbol{x}  \\
&\geq \log{\left( \frac{1}{r + \varepsilon} \right)}\left( \int_{|\boldsymbol{x} - \boldsymbol{x}_0| \leq r/2} q^{\varepsilon, \delta}(\boldsymbol{x}, t) d\boldsymbol{x} \right)^2,
\end{align*}
we finally obtain
\begin{align*}
\int_{|\boldsymbol{x} - \boldsymbol{x}_0| < r/2} q^{\varepsilon, \delta}(\boldsymbol{x}, t) d\boldsymbol{x} &\leq c \left( Q_0, M_0, H_0 \right) \left[ \log{\left( \frac{1}{r + \varepsilon} \right)} \right]^{-1/2}
\end{align*}
for $0 < r \leq 1/2$.
\end{proof}

We now confirm that the smoothed initial data (\ref{smooth_ID}) satisfies the assumption of Lemma~\ref{Lemm_VMF}. It easily follows that $Q_0^\delta = \| q_0^\delta \|_{L^1} \leq \| q_0 \|_{\mathcal{M}}$ and $M_0^\delta \leq \overline{R}^2 Q_0$, since $q_0^\delta$ is positive and supp $q_0^\delta \subset B_{\overline{R}}$. As for the pseudo-energy $H_0^{\varepsilon, \delta}$, we have
\begin{equation*}
\left| H_0^{\varepsilon, \delta} \right| = \left| \int_{\mathbb{R}^2} \left( G^\varepsilon \ast q_0^\delta \right)(\boldsymbol{x}) q_0^\delta (\boldsymbol{x}) d\boldsymbol{x} \right| \leq \| G^\varepsilon \ast q_0^\delta \|_{H^1_{\mathrm{loc}}} \| q_0^\delta \|_{H^{-1}_{\mathrm{loc}}}.
\end{equation*}
Owing to $\| q_0^\delta \|_{H^{-1}_{\mathrm{loc}}} \leq \| q_0 \|_{H^{-1}_{\mathrm{loc}}}$, it suffices to show that $\| G^\varepsilon \ast q_0^\delta \|_{H^1_{\mathrm{loc}}}$ is uniformly bounded for $\varepsilon$, $\delta >0$. Considering $G^\varepsilon \ast q_0^\delta = G \ast q_0^{\varepsilon, \delta}$ with $q_0^{\varepsilon, \delta} = h^\varepsilon \ast q_0^\delta$, we have
\begin{align*}
\| G^\varepsilon \ast q_0^\delta \|_{L^2_{\mathrm{loc}}} \leq \| (\chi_{|\boldsymbol{x}| < R_0} G) \ast q_0^{\varepsilon, \delta} \|_{L^2_{\mathrm{loc}}} + \| (\chi_{|\boldsymbol{x}| \geq R_0} G) \ast q_0^{\varepsilon, \delta} \|_{L^2_{\mathrm{loc}}}
\end{align*}
for any $R_0 > 0$. The first term in the right side is estimated as
\begin{equation*}
\| (\chi_{|\boldsymbol{x}| < R_0} G) \ast q_0^{\varepsilon, \delta} \|_{L^2} \leq \| G \|_{L^2(B_{R_0})} \| h^\varepsilon \ast q_0^\delta \|_{L^1} \leq \| G \|_{L^2(B_{R_0})} \| q_0 \|_{\mathcal{M}}.
\end{equation*}
To see the second term, note that 
\begin{equation*}
\| (\chi_{|\boldsymbol{x}| \geq R_0} G) \ast q_0^{\varepsilon, \delta} \|_{L^2(B_R)}^2 \leq  c  \int_{|\boldsymbol{x}| < R} \left( \int_{|\boldsymbol{x} - \boldsymbol{y}| \geq R_0} |\boldsymbol{x} - \boldsymbol{y}| q_0^{\varepsilon, \delta}(\boldsymbol{y}) d\boldsymbol{y} \right)^2 d\boldsymbol{x}
\end{equation*}
for any fixed $R > 0$. Taking $R_0$ such as $R_0 > 2 R$, we have $|\boldsymbol{y}| \geq |\boldsymbol{x} - \boldsymbol{y}| - |\boldsymbol{x}| > R_0 - R > R > |\boldsymbol{x}|$ and 
\begin{align*}
\| (\chi_{|\boldsymbol{x}| \geq R_0} G) \ast q_0^{\varepsilon, \delta} \|_{L^2(B_R)} &\leq c R \int_{|\boldsymbol{y}| \geq R} |\boldsymbol{y}| q_0^{\varepsilon, \delta}(\boldsymbol{y}) d\boldsymbol{y} \\
&\leq c R \int_{|\boldsymbol{y}| \geq R} \int_{|\boldsymbol{z}| < \overline{R}} \left( |\boldsymbol{y} - \boldsymbol{z}| + |\boldsymbol{z}| \right) h^\varepsilon(\boldsymbol{y} -\boldsymbol{z}) q_0^\delta(\boldsymbol{z}) d\boldsymbol{z} d\boldsymbol{y} \\
&\leq c R \left( \| (w_1 h^\varepsilon) \ast q_0^\delta \|_{L^1} + \overline{R} \| h^\varepsilon \ast q_0^\delta \|_{L^1}\right) \\
&\leq c R \left(\varepsilon \| w_1 h \|_{L^1}  + \overline{R} \right) \| q_0 \|_{\mathcal{M}}.
\end{align*}
Thus, we find 
\begin{equation*}
\| G^\varepsilon \ast q_0^\delta \|_{L^2(B_R)} \leq c (R, \overline{R}) \| q_0 \|_{\mathcal{M}}.
\end{equation*}
As for the derivative of $G^\varepsilon \ast q_0^\delta$, we note that the Biot-Savart low gives
\begin{equation*}
| \nabla (G^\varepsilon \ast q_0^\delta) | = | K^\varepsilon \ast q_0^\delta | = | h^\varepsilon \ast (K \ast q_0^\delta ) | = | h^\varepsilon \ast v_0^\delta |. 
\end{equation*}
Then, since it follows that
\begin{align*}
\| \nabla (G^\varepsilon \ast q_0^\delta) \|_{L^2(B_R)} &\leq \| h^\varepsilon \ast (\chi_{|\boldsymbol{x}| < 2 \overline{R}} v_0^\delta) \|_{L^2} + \| h^\varepsilon \ast (\chi_{|\boldsymbol{x}| \geq 2 \overline{R}} v_0^\delta) \|_{L^2(B_R)} \\
&\leq \| v_0^\delta \|_{L^2(B_{2 \overline{R}})} + c(R) \| v_0^\delta \|_{L^\infty(\mathbb{R}^2 \setminus B_{2\overline{R}})}
\end{align*}
and
\begin{equation*}
\| v_0^\delta \|_{L^\infty(\mathbb{R}^2 \setminus B_{2 \overline{R}})} \leq \frac{1}{2 \pi} \sup_{|\boldsymbol{x}| > 2 \overline{R}} \int_{|\boldsymbol{y}| < \overline{R}} \frac{1}{|\boldsymbol{x} - \boldsymbol{y}|} q_0^\delta(\boldsymbol{y}) d\boldsymbol{y} \leq \frac{1}{2\pi \overline{R}} \| q^\delta_0 \|_{L^1},
\end{equation*}
we have
\begin{equation*}
\| \nabla (G^\varepsilon \ast q_0^\delta) \|_{L^2(B_R)} \leq \| v_0 \|_{L^2(B_{2\overline{R}})} + c (R, \overline{R}) \| q_0 \|_{\mathcal{M}}.
\end{equation*}
Combining the above estimates, we obtain 
\begin{equation*}
\left| H_0^{\varepsilon, \delta} \right| \leq c(\overline{R})(\| v_0 \|_{L^2_{\mathrm{loc}}} + \| q_0 \|_{\mathcal{M}}) \| q_0 \|_{H^{-1}_{\mathrm{loc}}},
\end{equation*}
where $c(\overline{R}) > 0$ is independent of $\varepsilon$ and $\delta$.

\subsection{Limiting procedure} \label{limit-procedure}
We consider the $\delta \rightarrow 0$ limit of the approximate-solution sequence. Since we have $\| q^{\varepsilon,\delta}(\cdot, t) \|_{L^1} = \| q^\delta_0 \|_{L^1} \leq \| q_0 \|_{\mathcal{M}}$, there exist a subsequence $\{ q^{\varepsilon,\delta_j} \}_{j\in\mathbb{N}}$ and $q^{\varepsilon} \in L^\infty([0,T];\mathcal{M}(\mathbb{R}^2))$ such that 
\begin{equation}
q^{\varepsilon,\delta_j} \overset{\ast}{\rightharpoonup} q^{\varepsilon} \quad \mathrm{weakly}\ast \ \mathrm{in} \ L^\infty([0,T];\mathcal{M}(\mathbb{R}^2)), \label{lim_q_eps_delta}
\end{equation}
where $\delta_j \rightarrow 0$ as $j \rightarrow \infty$. We now see that $q^{\varepsilon}$ is a weak solution of the 2D filtered Euler equations, that is, $W_L(q^{\varepsilon};\psi) + W^\varepsilon_{NL}(q^{\varepsilon};\psi) = 0$. Since $q^{\varepsilon,\delta_j}$ is a solution of the 2D filtered Euler equations, it is sufficient to show that
\begin{equation*}
W_L(q^{\varepsilon,\delta_j};\psi) \rightarrow W_L(q^{\varepsilon};\psi), \qquad W^\varepsilon_{NL}(q^{\varepsilon,\delta_j};\psi) \rightarrow W^\varepsilon_{NL}(q^{\varepsilon};\psi)
\end{equation*}
as $j \rightarrow \infty$. It is straightforward from (\ref{lim_q_eps_delta}) to check the convergence of the linear term. Before showing the convergence of the nonlinear term, we note that $q^{\varepsilon,\delta_j}$ is uniformly bounded in $\mathrm{Lip}([0,T];H^{-4}(\mathbb{R}^2))$. Indeed, since we have
\begin{align*}
&\left| \int_0^T \int_{\mathbb{R}^2} \partial_t \psi(\boldsymbol{x},t) q^{\varepsilon,\delta_j}(\boldsymbol{x},t) d\boldsymbol{x} dt \right| \\
& \leq \left| \int_0^T \int_{\mathbb{R}^2} \int_{\mathbb{R}^2} H^\varepsilon_{\psi}(\boldsymbol{x},\boldsymbol{y},t) q^{\varepsilon,\delta_j}(\boldsymbol{y},t) q^{\varepsilon,\delta_j}(\boldsymbol{x},t) d\boldsymbol{y} d\boldsymbol{x} dt \right|
\end{align*}
with $\psi\in C_c^\infty(\mathbb{R}^2\times(0,T))$ and $H^\varepsilon_\psi(t) \in C_0(\mathbb{R}^2 \times \mathbb{R}^2)$ satisfies
\begin{align*}
\left| H^\varepsilon_{\psi}(\boldsymbol{x},\boldsymbol{y},t) \right| &\leq \frac{1}{4 \pi} P_K \left( \frac{|\boldsymbol{x} - \boldsymbol{y}|}{\varepsilon} \right) \frac{\left| \nabla \psi(\boldsymbol{x},t) - \nabla \psi(\boldsymbol{y},t) \right|}{|\boldsymbol{x} - \boldsymbol{y}|} \leq \frac{1}{4 \pi} \| \nabla^2 \psi (\cdot, t)\|_{L^\infty}
\end{align*}
for any $t \in [0, T]$, we obtain
\begin{align*}
\left| \int_0^T \int_{\mathbb{R}^2} \partial_t \psi(\boldsymbol{x},t) q^{\varepsilon,\delta_j}(\boldsymbol{x},t) d\boldsymbol{x} dt \right| &\leq c \| q^\delta_0 \|_{\mathcal{M}}^2 \| \psi \|_{L^1([0,T];W^{2, \infty}(\mathbb{R}^2))} \\
& \leq c \| q_0 \|_{\mathcal{M}}^2 \| \psi \|_{L^1([0,T];H^4(\mathbb{R}^2))},
\end{align*}
which implies that $q^{\varepsilon,\delta_j}$ are uniformly bounded in $\mathrm{Lip}([0,T];H^{-4}(\mathbb{R}^2))$ and equicontinuous in time with values in $H^{-4}(\mathbb{R}^2)$. Owing to the equicontinuity and the convergence (\ref{lim_q_eps_delta}), we have 
\begin{equation}
q^{\varepsilon,\delta_j}(\boldsymbol{x},t) q^{\varepsilon,\delta_j}(\boldsymbol{y},t) \overset{\ast}{\rightharpoonup} q^{\varepsilon}(\boldsymbol{x},t) q^{\varepsilon}(\boldsymbol{y},t) \quad \mathrm{weakly}\ast \ \mathrm{in} \ L^\infty([0,T];\mathcal{M}(\mathbb{R}^2 \times \mathbb{R}^2)), \label{conv-qq}
\end{equation}
see Lemma~3.2 in \cite{Schochet}. Thus, since $H^\varepsilon_\psi$ belongs to $C([0,T];(C_0(\mathbb{R}^2))^2)$, we find the convergence of the nonlinear term so that $q^\varepsilon \in C([0,T]; \mathcal{M}(\mathbb{R}^2))$ is a weak solution of the 2D filtered Euler equations with $q^\varepsilon(0) = q_0$. 

In order to show the decay estimate of the maximal vorticity function for $q^{\varepsilon}$, let $\rho \in C_c^\infty[0,\infty)$ be a cut-off function satisfying $0 \leq \rho \leq 1$, $\rho(r) = 1$ for $r \leq 1$ and $\rho(r) = 0$ for $r > 2$. Then, it follows that
\begin{align*}
\sup_{0\leq t \leq T, \boldsymbol{x}_0 \in\mathbb{R}^2 } \int_{|\boldsymbol{x} - \boldsymbol{x}_0| < r} q^{\varepsilon}(\boldsymbol{x},t) d\boldsymbol{x} &\leq \sup_{t , \boldsymbol{x}_0} \int_{\mathbb{R}^2} \rho\left( \frac{|\boldsymbol{x} - \boldsymbol{x}_0|}{r}\right) q^{\varepsilon}(\boldsymbol{x},t) d\boldsymbol{x} \\
& \leq \sup_{t , \boldsymbol{x}_0} \liminf_{j \rightarrow \infty} \int_{\mathbb{R}^2} \rho\left( \frac{|\boldsymbol{x} - \boldsymbol{x}_0|}{r}\right) q^{\varepsilon, \delta_j}(\boldsymbol{x},t) d\boldsymbol{x} \\
& \leq \sup_{t , \boldsymbol{x}_0} \liminf_{j \rightarrow \infty} \int_{|\boldsymbol{x} - \boldsymbol{x}_0| < 2 r}  q^{\varepsilon, \delta_j}(\boldsymbol{x},t) d\boldsymbol{x}
\end{align*}
and thus the estimate (\ref{VMF_approximate}) yields
\begin{equation}
M(r, q^{\varepsilon}) < c \left[ \log{\left( \frac{1}{4r + \varepsilon}\right)} \right]^{- 1/2} \label{VMF_filtered}
\end{equation}
for $0 < r \leq 1/8$. Especially, (\ref{log-q-q-est}) and (\ref{conv-qq}) yield
\begin{equation}
\iint_{|\boldsymbol{x} - \boldsymbol{y}| \leq r} \log{\left( \frac{1}{|\boldsymbol{x} - \boldsymbol{y}| + \varepsilon} \right)} q^{\varepsilon}(\boldsymbol{x}, t) q^{\varepsilon}(\boldsymbol{y}, t) d\boldsymbol{y} d\boldsymbol{x} \leq c  \label{est_int_log_q}
\end{equation}
for $0 < r \leq 1/4$. The estimate (\ref{est_int_log_q}) implies that the filtered velocity field defined by $\boldsymbol{u}^\varepsilon = \boldsymbol{K}^\varepsilon \ast q^\varepsilon$ is uniformly bounded in $L^\infty([0,T];L^2_{\mathrm{loc}}(\mathbb{R}^2))$.

\begin{lemma}
{\it
For any $R > 0$, there exists a constant $c= c(R) > 0$ independent of $\varepsilon$ such that
\begin{equation*}
\sup_{0\leq t \leq T}\int_{|\boldsymbol{x}| \leq R} \left| \boldsymbol{u}^\varepsilon(\boldsymbol{x}, t) \right|^2 d\boldsymbol{x} \leq c.
\end{equation*}
} \label{bdd-u-eps}
\end{lemma}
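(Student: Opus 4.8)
The plan is to square $\boldsymbol{u}^\varepsilon=\boldsymbol{K}^\varepsilon\ast q^\varepsilon$, reduce the local energy to a double integral against $q^\varepsilon\otimes q^\varepsilon$, and then feed in the two uniform bounds already in hand: $\|q^\varepsilon(\cdot,t)\|_{\mathcal{M}}=\|q_0\|_{\mathcal{M}}$ and the logarithmic estimate (\ref{est_int_log_q}). Since $q_0\geq 0$ we have $q^\varepsilon(\cdot,t)\geq 0$, so $|\boldsymbol{u}^\varepsilon(\boldsymbol{x},t)|\leq\int|\boldsymbol{K}^\varepsilon(\boldsymbol{x}-\boldsymbol{y})|\,q^\varepsilon(\boldsymbol{y},t)\,d\boldsymbol{y}$; squaring, integrating over $\{|\boldsymbol{x}|\leq R\}$, and applying Tonelli's theorem gives
\[
\int_{|\boldsymbol{x}|\leq R}|\boldsymbol{u}^\varepsilon(\boldsymbol{x},t)|^2\,d\boldsymbol{x}\leq\iint F_R^\varepsilon(\boldsymbol{y},\boldsymbol{z})\,q^\varepsilon(\boldsymbol{y},t)\,q^\varepsilon(\boldsymbol{z},t)\,d\boldsymbol{y}\,d\boldsymbol{z},\qquad F_R^\varepsilon(\boldsymbol{y},\boldsymbol{z})=\int_{|\boldsymbol{x}|\leq R}|\boldsymbol{K}^\varepsilon(\boldsymbol{x}-\boldsymbol{y})|\,|\boldsymbol{K}^\varepsilon(\boldsymbol{x}-\boldsymbol{z})|\,d\boldsymbol{x}.
\]
Everything then reduces to a deterministic estimate of the kernel $F_R^\varepsilon$.

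The key claim is that there is $c=c(R)>0$, independent of $\varepsilon$ and of $\boldsymbol{y},\boldsymbol{z}$, with
\[
F_R^\varepsilon(\boldsymbol{y},\boldsymbol{z})\leq c(R)\max\left(1,\ \log\frac{1}{|\boldsymbol{y}-\boldsymbol{z}|+\varepsilon}\right).
\]
To prove it I would use two pointwise bounds on $\boldsymbol{K}^\varepsilon$: first, since $0\leq P_K\leq 1$ in (\ref{K^eps-KP}), $|\boldsymbol{K}^\varepsilon(\boldsymbol{x})|\leq\frac{1}{2\pi|\boldsymbol{x}|}$; second, from the scaling $\boldsymbol{K}^\varepsilon=\boldsymbol{K}\ast h^\varepsilon$ one has $\boldsymbol{K}^\varepsilon(\boldsymbol{x})=\varepsilon^{-1}\boldsymbol{K}^{\varepsilon=1}(\boldsymbol{x}/\varepsilon)$, and since $\boldsymbol{K}^{\varepsilon=1}\in C_0(\mathbb{R}^2)$ is bounded, $|\boldsymbol{K}^\varepsilon(\boldsymbol{x})|\leq C_0/\varepsilon$; together $|\boldsymbol{K}^\varepsilon(\boldsymbol{x})|\leq\min\left(\frac{1}{2\pi|\boldsymbol{x}|},\frac{C_0}{\varepsilon}\right)$. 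Then I split the $\boldsymbol{x}$-integral defining $F_R^\varepsilon$. On the part where $|\boldsymbol{x}-\boldsymbol{y}|>1/2$ or $|\boldsymbol{x}-\boldsymbol{z}|>1/2$, one factor is at most $1/\pi$ and the other integrates over $\{|\boldsymbol{x}|\leq R\}$ against $\frac{1}{2\pi|\cdot|}$ to a constant depending only on $R$. On the complementary part $|\boldsymbol{x}-\boldsymbol{y}|\leq 1/2$, I discard the constraint $|\boldsymbol{x}|\leq R$ and translate by $\boldsymbol{y}$, reducing to $\int_{|\boldsymbol{w}|\leq 1}|\boldsymbol{K}^\varepsilon(\boldsymbol{w})|\,|\boldsymbol{K}^\varepsilon(\boldsymbol{w}+\boldsymbol{a})|\,d\boldsymbol{w}$ with $\boldsymbol{a}=\boldsymbol{y}-\boldsymbol{z}$; the classical potential splitting into $\{|\boldsymbol{w}|\leq|\boldsymbol{a}|/2\}$, $\{|\boldsymbol{w}+\boldsymbol{a}|\leq|\boldsymbol{a}|/2\}$, and the region where $|\boldsymbol{w}|$ and $|\boldsymbol{w}+\boldsymbol{a}|$ are comparable gives a bound $c(1+\log\frac{1}{|\boldsymbol{a}|})$ using only the $\frac{1}{2\pi|\cdot|}$ bound, and using the cap $|\boldsymbol{K}^\varepsilon|\leq C_0/\varepsilon$ on the two $O(\varepsilon)$-neighbourhoods of $\boldsymbol{w}=0$ and $\boldsymbol{w}=-\boldsymbol{a}$ replaces $\log\frac{1}{|\boldsymbol{a}|}$ by $\log\frac{1}{|\boldsymbol{a}|+\varepsilon}$. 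This near-diagonal potential estimate with the $\varepsilon$-regularization is the only technical point, and it is routine.

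Granting the claim, I split the double integral at $|\boldsymbol{y}-\boldsymbol{z}|=1/4$. On $\{|\boldsymbol{y}-\boldsymbol{z}|\geq 1/4\}$ the kernel $F_R^\varepsilon$ is at most $c(R)$, so that piece is at most $c(R)\|q^\varepsilon(\cdot,t)\|_{\mathcal{M}}^2=c(R)\|q_0\|_{\mathcal{M}}^2$; on $\{|\boldsymbol{y}-\boldsymbol{z}|<1/4\}$, for $\varepsilon$ small enough that $\log\frac{1}{|\boldsymbol{y}-\boldsymbol{z}|+\varepsilon}\geq 1$ there, that piece is at most $c(R)\iint_{|\boldsymbol{y}-\boldsymbol{z}|\leq 1/4}\log\frac{1}{|\boldsymbol{y}-\boldsymbol{z}|+\varepsilon}\,q^\varepsilon(\boldsymbol{y},t)q^\varepsilon(\boldsymbol{z},t)\,d\boldsymbol{y}\,d\boldsymbol{z}\leq c(R)$ by (\ref{est_int_log_q}); for $\varepsilon$ bounded away from $0$ the whole integral is trivially at most $c(R)\|q_0\|_{\mathcal{M}}^2$. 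Since all these bounds are uniform in $t\in[0,T]$, taking the supremum over $t$ yields the lemma. I note finally that the $+\varepsilon$ inside the logarithm — equivalently the bound $|\boldsymbol{K}^\varepsilon|\leq C_0/\varepsilon$ — is genuinely used: dropping it would require control of $\iint\log\frac{1}{|\boldsymbol{y}-\boldsymbol{z}|}q^\varepsilon q^\varepsilon$, which fails for concentrating data such as the point-vortex approximation in Theorem~\ref{convergence-thm2}, so the filtering enters the estimate in an essential way.
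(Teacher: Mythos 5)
Your proof is correct and follows essentially the same route as the paper: write the local kinetic energy as a double integral of $q^\varepsilon\otimes q^\varepsilon$ against the kernel $\int_{|\boldsymbol{x}|\le R}\boldsymbol{K}^\varepsilon(\boldsymbol{x}-\boldsymbol{y})\cdot\boldsymbol{K}^\varepsilon(\boldsymbol{x}-\boldsymbol{z})\,d\boldsymbol{x}$, split near and far from the diagonal with the conserved mass $\|q^\varepsilon(\cdot,t)\|_{\mathcal{M}}=\|q_0\|_{\mathcal{M}}$ handling the far part and the logarithmic estimate (\ref{est_int_log_q}) handling the near part. The only difference is that where the paper cites Lemma~2.2 of \cite{Liu} for the kernel bound $|\mathscr{K}^\varepsilon(\boldsymbol{y},\boldsymbol{z},R)|\le c\log\frac{1}{|\boldsymbol{y}-\boldsymbol{z}|+\varepsilon}$, you reprove it directly from $|\boldsymbol{K}^\varepsilon(\boldsymbol{x})|\le\min\left(\frac{1}{2\pi|\boldsymbol{x}|},\frac{c}{\varepsilon}\right)$, which is a correct, self-contained substitute.
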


\begin{proof}
Using the filtered Biot-Savart formula, that is, $\boldsymbol{u}^\varepsilon = \boldsymbol{K}^\varepsilon \ast q^\varepsilon$, we find 
\begin{align*}
\int_{|\boldsymbol{x}| \leq R} \left| \boldsymbol{u}^\varepsilon(\boldsymbol{x}, t) \right|^2 d\boldsymbol{x} =  \int_{\mathbb{R}^2} \int_{\mathbb{R}^2} \mathscr{K}^\varepsilon(\boldsymbol{y}, \boldsymbol{z},R) q^\varepsilon(\boldsymbol{y}, t) q^\varepsilon(\boldsymbol{z}, t)  d\boldsymbol{y} d\boldsymbol{z},
\end{align*}
where
\begin{equation*}
\mathscr{K}^\varepsilon(\boldsymbol{y}, \boldsymbol{z},R) = \int_{|\boldsymbol{x}| \leq R} \boldsymbol{K}^\varepsilon(\boldsymbol{x} - \boldsymbol{y}) \cdot \boldsymbol{K}^\varepsilon(\boldsymbol{x} - \boldsymbol{z}) d\boldsymbol{x}.
\end{equation*}
Consider the following decomposition with the cut-off function $\rho$.
\begin{align*}
\int_{|\boldsymbol{x}| \leq R} \left| \boldsymbol{u}^\varepsilon(\boldsymbol{x}, t) \right|^2 d\boldsymbol{x} &=  \iint \rho\left(\frac{|\boldsymbol{y} - \boldsymbol{z}|}{R_0} \right) \mathscr{K}^\varepsilon(\boldsymbol{y}, \boldsymbol{z},R) q^\varepsilon(\boldsymbol{y}, t) q^\varepsilon(\boldsymbol{z}, t)  d\boldsymbol{y} d\boldsymbol{z} \\
&\quad + \iint \left[ 1 - \rho\left(\frac{|\boldsymbol{y} - \boldsymbol{z}|}{R_0} \right) \right] \mathscr{K}^\varepsilon(\boldsymbol{y}, \boldsymbol{z},R) q^\varepsilon(\boldsymbol{y}, t) q^\varepsilon(\boldsymbol{z}, t)  d\boldsymbol{y} d\boldsymbol{z},
\end{align*}
where $R_0 > 0$. According to Lemma~2.2 in \cite{Liu}, it follows that
\begin{equation*}
\left| \mathscr{K}^\varepsilon(\boldsymbol{y}, \boldsymbol{z},R) \right| \leq c \log{\left(\frac{1}{|\boldsymbol{y} - \boldsymbol{z}| + \varepsilon} \right)}
\end{equation*}
for $|\boldsymbol{y} - \boldsymbol{z}| \leq 2 R_0$. Hence, owing to (\ref{est_int_log_q}), we obtain
\begin{equation*}
\left| \iint \rho\left(\frac{|\boldsymbol{y} - \boldsymbol{z}|}{R_0} \right) \mathscr{K}^\varepsilon(\boldsymbol{y}, \boldsymbol{z},R) q^\varepsilon(\boldsymbol{y}, t) q^\varepsilon(\boldsymbol{z}, t)  d\boldsymbol{y} d\boldsymbol{z} \right| \leq c.
\end{equation*}
For the case of $|\boldsymbol{y} - \boldsymbol{z}| > R_0$, it is easily confirmed that $\mathscr{K}^\varepsilon(R) \in C_0(\mathbb{R}^2 \times \mathbb{R}^2)$ and there exists a constant $c = c(R) > 0$ such that $\left| \mathscr{K}^\varepsilon(\boldsymbol{y}, \boldsymbol{z},R) \right| \leq c$, which yields  
\begin{equation*}
\left| \iint \left[ 1 - \rho\left(\frac{|\boldsymbol{y} - \boldsymbol{z}|}{R_0} \right) \right]  \mathscr{K}^\varepsilon(\boldsymbol{y}, \boldsymbol{z},R) q^\varepsilon(\boldsymbol{y}, t) q^\varepsilon(\boldsymbol{z}, t)  d\boldsymbol{y} d\boldsymbol{z} \right| \leq c \| q_0 \|^2_{\mathcal{M}}.
\end{equation*}
\end{proof}

\section{Convergence to the Euler equations}
\subsection{Convergence of the vorticity}
We consider the $\varepsilon \rightarrow 0$ limit of solutions of the 2D filtered Euler equations. Owing to $\| q^\varepsilon (\cdot, t)\|_{\mathcal{M}} = \| q_0 \|_{\mathcal{M}}$, there exists a subsequence $\{ q^{\varepsilon_j} \}$ and $q \in L^\infty([0,T]; \mathcal{M}(\mathbb{R}^2))$ such that
\begin{equation}
q^{\varepsilon_j} \overset{\ast}{\rightharpoonup} q  \quad \mathrm{weakly}\ast \ \mathrm{in} \ L^\infty([0,T];\mathcal{M}(\mathbb{R}^2))  \label{lim_q_eps}
\end{equation}
as $j \rightarrow \infty$. Then, the filtered vorticity $\omega^{\varepsilon_j}$ also converges to $q$ weakly--$\ast$ in $L^\infty([0,T];\mathcal{M}(\mathbb{R}^2))$. Indeed, $q^\varepsilon$ and $\omega^\varepsilon$ are uniformly bounded in $C([0,T];\mathcal{M}(\mathbb{R}^2))$ and we have 
\begin{align*}
\left| \int_{\mathbb{R}^2} \psi(\boldsymbol{x})\left( \omega^\varepsilon(\boldsymbol{x},t) - q^\varepsilon(\boldsymbol{x},t) \right) d\boldsymbol{x} \right| &= \left| \int_{\mathbb{R}^2} \left( (h^\varepsilon \ast \psi) (\boldsymbol{x}) - \psi(\boldsymbol{x}) \right) q^\varepsilon(\boldsymbol{x},t) d\boldsymbol{x} \right| \\
&\leq \| \left( h^\varepsilon \ast \psi \right) -\psi \|_{L^\infty} \| q_0 \|_{\mathcal{M}}.
\end{align*}
for any $\psi \in C^1_0(\mathbb{R}^2)$. Since it follows that
\begin{align*}
\left| \left( h^\varepsilon \ast \psi \right)(\boldsymbol{x}) - \psi(\boldsymbol{x}) \right| &\leq  \int_{\mathbb{R}^2} h(\boldsymbol{y}) \left| \psi(\boldsymbol{x} - \varepsilon\boldsymbol{y}) - \psi(\boldsymbol{x}) \right| d\boldsymbol{y} \\
&\leq \varepsilon \| \nabla \psi \|_{L^\infty} \int_{\mathbb{R}^2} |\boldsymbol{y}| \left|h(\boldsymbol{y})\right| d\boldsymbol{y},
\end{align*}
we obtain
\begin{equation*}
\left| \int_{\mathbb{R}^2} \psi(\boldsymbol{x})\left( \omega^\varepsilon(\boldsymbol{x},t) - q^\varepsilon(\boldsymbol{x},t) \right) d\boldsymbol{x} \right| \leq \varepsilon \| \nabla \psi \|_{L^\infty} \| w_1 h \|_{L^1} \| q_0 \|_{\mathcal{M}}.
\end{equation*}
By a density argument, $\omega^\varepsilon - q^\varepsilon$ converges to zero weakly-$\ast$ in $L^\infty([0,T];\mathcal{M}(\mathbb{R}^2))$ in the $\varepsilon \rightarrow 0$ limit, which gives the desired result.

We now show that the limit vorticity $q$ in (\ref{lim_q_eps}) is a solution of the 2D Euler equations in the sense of Definition~\ref{def-sol-vorticity}. Note that $q$ has a decay estimate for its vorticity maximal function, that is,
\begin{equation}
M(r, q) < c \left[ \log{\left( \frac{1}{8r + \varepsilon}\right)} \right]^{- 1/2} \label{VMF}
\end{equation}
for sufficiently small $r > 0$, which is shown in the same manner as (\ref{VMF_filtered}). Recall the weak form of the 2D Euler equations: $W_L(q;\psi) + W_{NL}(q;\psi) = 0$, where
\begin{align*}
W_L(q;\psi) &= \int_0^T \int_{\mathbb{R}^2} \partial_t \psi(\boldsymbol{x},t) q(\boldsymbol{x},t) d\boldsymbol{x} dt, \\
W_{NL}(q;\psi) &= \int_0^T \int_{\mathbb{R}^2} \int_{\mathbb{R}^2} H_{\psi}(\boldsymbol{x},\boldsymbol{y},t) q(\boldsymbol{y},t) q(\boldsymbol{x},t) d\boldsymbol{y} d\boldsymbol{x} dt
\end{align*}
with $\psi \in C_c^\infty(\mathbb{R}^2 \times (0,T))$, and the 2D filtered Euler equations: $W_L(q^\varepsilon;\psi) + W_{NL}^\varepsilon(q^\varepsilon;\psi) = 0$, where
\begin{align*}
W_L(q^\varepsilon;\psi) &= \int_0^T \int_{\mathbb{R}^2} \partial_t \psi(\boldsymbol{x},t) q^\varepsilon(\boldsymbol{x},t) d\boldsymbol{x} dt, \\
W_{NL}^\varepsilon(q^\varepsilon;\psi) &= \int_0^T \int_{\mathbb{R}^2} \int_{\mathbb{R}^2} H^\varepsilon_{\psi}(\boldsymbol{x},\boldsymbol{y},t) q^\varepsilon(\boldsymbol{y},t) q^\varepsilon(\boldsymbol{x},t) d\boldsymbol{y} d\boldsymbol{x} dt.
\end{align*}
The convergence of the linear term, $W_L(q^{\varepsilon_j};\psi) \rightarrow W_L(q;\psi)$, directly follows from (\ref{lim_q_eps}). To see the convergence of the nonlinear term, we divide $W_{NL}^\varepsilon(q^\varepsilon;\psi) - W_{NL}(q;\psi)$ into two parts in the same way as shown in \cite{Bardos}:
\begin{align*}
&W_{NL}^\varepsilon(q^\varepsilon;\psi) - W_{NL}(q;\psi) \\
&= \int_0^T \iint \left( H_\psi^\varepsilon(\boldsymbol{x},\boldsymbol{y},t) - H_\psi(\boldsymbol{x},\boldsymbol{y},t) \right) q^\varepsilon(\boldsymbol{y},t) q^\varepsilon(\boldsymbol{x},t) d\boldsymbol{y} d\boldsymbol{x} dt \\
&\quad + \int_0^T \iint H_\psi(\boldsymbol{x},\boldsymbol{y},t) \left( q^\varepsilon(\boldsymbol{y},t) q^\varepsilon(\boldsymbol{x},t) - q(\boldsymbol{y},t) q(\boldsymbol{x},t) \right) d\boldsymbol{y} d\boldsymbol{x} dt \\
&= I_1 + I_2.
\end{align*}
Consider the following decomposition of $I_1$ with the cut-off function $\rho$.
\begin{align*}
I_1 &= \int_0^T \iint \left[ 1 - \rho\left( \frac{|\boldsymbol{x} - \boldsymbol{y}|}{\varepsilon R} \right) \right] \left( H_\psi^\varepsilon(\boldsymbol{x},\boldsymbol{y},t) - H_\psi(\boldsymbol{x},\boldsymbol{y},t) \right) q^\varepsilon(\boldsymbol{y},t) q^\varepsilon(\boldsymbol{x},t) d\boldsymbol{y} d\boldsymbol{x} dt \\
&\quad + \int_0^T \iint \rho\left( \frac{|\boldsymbol{x} - \boldsymbol{y}|}{\varepsilon R} \right) \left( H_\psi^\varepsilon(\boldsymbol{x},\boldsymbol{y},t) - H_\psi(\boldsymbol{x},\boldsymbol{y},t) \right) q^\varepsilon(\boldsymbol{y},t) q^\varepsilon(\boldsymbol{x},t) d\boldsymbol{y} d\boldsymbol{x} dt \\
&= I_{11} + I_{12}.
\end{align*}
From the definitions of $H_\psi$ and $H_\psi^\varepsilon$, it follows that
\begin{equation*}
\left| H_\psi^\varepsilon(\boldsymbol{x},\boldsymbol{y},t) - H_\psi(\boldsymbol{x},\boldsymbol{y},t) \right| \leq \frac{1}{4 \pi} \left[ 1 - P_K\left(\frac{|\boldsymbol{x} - \boldsymbol{y}|}{\varepsilon}\right) \right] \| \nabla^2 \psi \|_{L^\infty}.
\end{equation*}
Since $P_K(r)$ converges to $1$ as $r \rightarrow \infty$, for any $\delta > 0$, there exists $R_\delta > 0$ such that $1 - P_K(r) < \delta$ for any $r > R_\delta$. Thus, setting $\zeta_R(\boldsymbol{x}) \equiv (1 - \rho(|\boldsymbol{x}|/R))(1 - P_K(|\boldsymbol{x}|))$ and taking $R > R_\delta$, we find $\zeta_R \in C_0(\mathbb{R}^2)$ with $\| \zeta_R \|_{L^\infty} \leq \delta$ and 
\begin{align*}
|I_{11}| &\leq c \| \nabla^2 \psi \|_{L^\infty} \int_0^T \iint \zeta_R \left( \frac{|\boldsymbol{x} - \boldsymbol{y}|}{\varepsilon} \right) q^\varepsilon(\boldsymbol{y},t) q^\varepsilon(\boldsymbol{x},t) d\boldsymbol{y} d\boldsymbol{x} dt \\
&\leq c \| \nabla^2 \psi \|_{L^\infty} \| \zeta_R \|_{L^\infty} \int_0^T \| q^\varepsilon(\cdot,t) \|_{\mathcal{M}}^2 dt \\
&\leq c \delta T \| \nabla^2 \psi \|_{L^\infty} \| q_0 \|_{\mathcal{M}}^2. 
\end{align*}
We estimate $I_{12}$ with the vorticity maximal function.
\begin{align*}
|I_{12}| &\leq c \| \nabla^2 \psi \|_{L^\infty} \int_0^T \iint \rho\left( \frac{|\boldsymbol{x} - \boldsymbol{y}|}{\varepsilon R} \right) q^\varepsilon(\boldsymbol{y},t) q^\varepsilon(\boldsymbol{x},t) d\boldsymbol{y} d\boldsymbol{x} dt \\
&\leq c T \| \nabla^2 \psi \|_{L^\infty} \| q_0 \|_{\mathcal{M}} M(2 \varepsilon R, q^\varepsilon)  \\
&\leq c T \| \nabla^2 \psi \|_{L^\infty} \| q_0 \|_{\mathcal{M}} \left[ \log{\left( \frac{1}{4\varepsilon R + \varepsilon} \right)} \right]^{-1/2},
\end{align*}
for any sufficiently small $\varepsilon > 0$. Hence, there exists $\varepsilon_0 = \varepsilon_0(\delta) > 0$ such that, for any $\varepsilon < \varepsilon_0$, we have 
\begin{equation*}
|I_1| \leq c \delta T \| \nabla^2 \psi \|_{L^\infty} \| q_0 \|_{\mathcal{M}} (1 +  \| q_0 \|_{\mathcal{M}}).
\end{equation*}
To estimate $I_2$, similarly to $I_1$, we consider the following decomposition with $\rho$.
\begin{align*}
I_2 &= \int_0^T \!\!\! \iint \left[ 1 - \rho\left( \frac{|\boldsymbol{x} - \boldsymbol{y}|}{\delta} \right) \right] H_\psi(\boldsymbol{x},\boldsymbol{y},t) \left( q^\varepsilon(\boldsymbol{y},t) q^\varepsilon(\boldsymbol{x},t) - q(\boldsymbol{y},t) q(\boldsymbol{x},t) \right) d\boldsymbol{y} d\boldsymbol{x} dt \\
&\quad + \int_0^T \!\!\! \iint \rho\left( \frac{|\boldsymbol{x} - \boldsymbol{y}|}{\delta} \right) H_\psi(\boldsymbol{x},\boldsymbol{y},t) \left( q^\varepsilon(\boldsymbol{y},t) q^\varepsilon(\boldsymbol{x},t) - q(\boldsymbol{y},t) q(\boldsymbol{x},t) \right) d\boldsymbol{y} d\boldsymbol{x} dt \\
&= I_{21} + I_{22}.
\end{align*}
Since we find $q^\varepsilon \in \mathrm{Lip}([0,T];H^{-4}(\mathbb{R}^2))$ and 
\begin{equation*}
q^{\varepsilon_j}(\boldsymbol{y},t) q^{\varepsilon_j}(\boldsymbol{x},t) \overset{\ast}{\rightharpoonup}q(\boldsymbol{y},t) q(\boldsymbol{x},t) \quad \mathrm{weakly}\ast \ \mathrm{in} \ L^\infty([0,T];\mathcal{M}(\mathbb{R}^2 \times \mathbb{R}^2))
\end{equation*}
in the same manner as $q^{\varepsilon,\delta_j}$ in Section~\ref{limit-procedure}, it follows from 
\begin{equation*}
\left[ 1 - \rho\left( \frac{|\boldsymbol{x} - \boldsymbol{y}|}{\delta} \right) \right] H_\psi(\boldsymbol{x},\boldsymbol{y},t) \in C([0,T];(C_0(\mathbb{R}^2))^2)
\end{equation*}
that $I_{21} \rightarrow 0$ as $\varepsilon \rightarrow 0$. As for the convergence of $I_{22}$, we use the vorticity maximal functions.
\begin{align*}
|I_{22}| &= \| H_\psi \|_{L^\infty(\mathbb{R}^2\times[0,T])} \int_0^T \left[ \iint \rho\left( \frac{|\boldsymbol{x} - \boldsymbol{y}|}{\delta} \right) q^\varepsilon(\boldsymbol{y},t) q^\varepsilon(\boldsymbol{x},t) d\boldsymbol{y} d\boldsymbol{x} dt \right.\\
&\quad + \left. \iint \rho\left( \frac{|\boldsymbol{x} - \boldsymbol{y}|}{\delta} \right) q(\boldsymbol{y},t) q(\boldsymbol{x},t) d\boldsymbol{y} d\boldsymbol{x} \right] dt \\
&\leq T \| H_\psi \|_{L^\infty} \| q_0 \|_{\mathcal{M}} \left( M(2 \delta, q^\varepsilon ) + M(2 \delta, q ) \right) \\
&\leq c T \| H_\psi \|_{L^\infty} \| q_0 \|_{\mathcal{M}} \left[ \log{\left( \frac{1}{4\delta + \varepsilon} \right)} \right]^{-1/2}.
\end{align*}
Hence, we have $I_{22} \rightarrow 0$ so that $I_{2} \rightarrow 0$ as $\delta$, $\varepsilon \rightarrow 0$. Consequently, we obtain $W_{NL}(q^{\varepsilon_j}; \psi) \rightarrow W_{NL}(q; \psi)$ as $j \rightarrow \infty$.

In order to show the Lipschitz continuity of $q$, we recall that $q^\varepsilon$ is uniformly bounded in $L^\infty([0,T];\mathcal{M}(\mathbb{R}^2))$ and $\mathrm{Lip}([0,T];H^{-4}(\mathbb{R}^2))$. Since $\mathcal{M}(\mathbb{R}^2)$ is continuously embedded in $H^{-s}_{\mathrm{loc}}(\mathbb{R}^2)$ and $H^{-s}_{\mathrm{loc}}(\mathbb{R}^2)$ is compactly embedded in $H^{-4}(\mathbb{R}^2)$ for $1 < s < 4$, the compactness theorem gives the existence of a limit $\overline{q}$ such that a subsequence $\{ q^{\varepsilon_j}\}$ converges to $\overline{q}$ in $\mathrm{Lip}([0,T];H_{\mathrm{loc}}^{-4}(\mathbb{R}^2))$. From a density argument, we find that $q = \overline{q}$ in $L^\infty([0,T];\mathcal{M}(\mathbb{R}^2)) \cap \mathrm{Lip}([0,T];H_{\mathrm{loc}}^{-4}(\mathbb{R}^2))$. Therefore, we conclude that $q$ is a weak solution of the 2D Euler equations in the sense of Definition~\ref{def-sol-vorticity}.

\subsection{Convergence of the velocity field}

We show that the filtered velocity $\boldsymbol{u}^\varepsilon$ converges to a weak solution of the 2D Euler equations. The proof proceeds in a similar fashion to that in \cite{Liu}, though their method is for an discretized problem with the vortex method. Note that $q^\varepsilon \in C([0, T];\mathcal{M}(\mathbb{R}^2))$ satisfies
\begin{equation*}
\int_0^T \int_{\mathbb{R}^2} \left( \partial_t \psi  + \nabla \psi \cdot \boldsymbol{u}^\varepsilon  \right) q^\varepsilon d\boldsymbol{x} dt = 0
\end{equation*}
for any $\psi\in C_c^\infty(\mathbb{R}^2\times(0,T))$. This is equivalent to 
\begin{equation}
\int_0^T \int_{\mathbb{R}^2} \left( \nabla^\perp\partial_t \psi \cdot \boldsymbol{v}^\varepsilon + \nabla^\perp \otimes \nabla \psi : \boldsymbol{u}^\varepsilon \otimes \boldsymbol{u}^\varepsilon \right) d\boldsymbol{x} dt = \int_0^T W_1(t) dt, \label{weak-form-velo-1}
\end{equation}
where $W_1$ is given by
\begin{equation}
W_1 = \int_{\mathbb{R}^2} \nabla \psi \cdot \boldsymbol{u}^\varepsilon \left( \omega^\varepsilon - q^\varepsilon \right)  d\boldsymbol{x}. \label{W1}
\end{equation}

First, we show that $\int_0^T W_1(t) dt$ converges to zero as $\varepsilon \rightarrow 0$. It follows that
\begin{align*}
W_1 &= \int_{\mathbb{R}^2}\int_{\mathbb{R}^2} (\nabla \psi(\boldsymbol{x})\cdot \boldsymbol{u}^\varepsilon(\boldsymbol{x}) - \nabla \psi(\boldsymbol{y}) \cdot \boldsymbol{u}^\varepsilon(\boldsymbol{y}) ) h^\varepsilon(\boldsymbol{x} - \boldsymbol{y}) q^\varepsilon(\boldsymbol{y}) d\boldsymbol{y} d\boldsymbol{x} \\
&= \iint (\nabla \psi(\boldsymbol{x}) - \nabla \psi(\boldsymbol{y}) ) \cdot \boldsymbol{u}^\varepsilon(\boldsymbol{x}) h^\varepsilon(\boldsymbol{x} - \boldsymbol{y}) q^\varepsilon(\boldsymbol{y}) d\boldsymbol{y} d\boldsymbol{x} \\
& \quad + \iint \nabla \psi(\boldsymbol{y})\cdot (\boldsymbol{u}^\varepsilon(\boldsymbol{x}) - \boldsymbol{u}^\varepsilon(\boldsymbol{y}) ) h^\varepsilon(\boldsymbol{x} - \boldsymbol{y}) q^\varepsilon(\boldsymbol{y}) d\boldsymbol{y} d\boldsymbol{x} \\
&\equiv W_{11} + W_{12}.
\end{align*}
Note that $|\varepsilon \boldsymbol{K}^\varepsilon(\boldsymbol{x})| \leq c$ for any $\boldsymbol{x} \in \mathbb{R}^2$ and $|\varepsilon \boldsymbol{K}^\varepsilon(\boldsymbol{x})| \leq c \varepsilon^{1/2}$ for $|\boldsymbol{x}| \geq \varepsilon^{1/2}$. Then, we have
\begin{align*}
|W_{11}| &\leq \|\nabla^2 \psi \|_{L^\infty} \iiint |\boldsymbol{x} - \boldsymbol{y}| |\boldsymbol{K}^\varepsilon(\boldsymbol{x} - \boldsymbol{z})| h^\varepsilon(\boldsymbol{x} - \boldsymbol{y}) q^\varepsilon(\boldsymbol{z}) q^\varepsilon(\boldsymbol{y}) d\boldsymbol{z} d\boldsymbol{y} d\boldsymbol{x} \\
&\leq c \|\nabla^2 \psi \|_{L^\infty} \left[\iiint \rho\left(\frac{|\boldsymbol{x} - \boldsymbol{z}|}{\varepsilon^{1/2}} \right) \frac{|\boldsymbol{x} - \boldsymbol{y}|}{\varepsilon} h^\varepsilon(\boldsymbol{x} - \boldsymbol{y}) q^\varepsilon(\boldsymbol{z}) q^\varepsilon(\boldsymbol{y}) d\boldsymbol{z} d\boldsymbol{y} d\boldsymbol{x} \right.\\
&\quad \left. + \varepsilon^{1/2} \iiint \left[ 1 - \rho\left(\frac{|\boldsymbol{x} - \boldsymbol{z}|}{\varepsilon^{1/2}} \right) \right] \frac{|\boldsymbol{x} - \boldsymbol{y}|}{\varepsilon} h^\varepsilon(\boldsymbol{x} - \boldsymbol{y}) q^\varepsilon(\boldsymbol{z}) q^\varepsilon(\boldsymbol{y}) d\boldsymbol{z} d\boldsymbol{y} d\boldsymbol{x} \right] \\
&\equiv c \|\nabla^2 \psi \|_{L^\infty} \left( W_{111} + W_{112} \right).
\end{align*}
To estimate $W_{111}$, we consider
\begin{align*}
W_{111} &= \iiint \rho\left(\frac{|\boldsymbol{x} - \boldsymbol{z}|}{\varepsilon^{1/2}} \right) \rho\left(\frac{|\boldsymbol{y} - \boldsymbol{z}|}{\varepsilon^{1/3}} \right) \frac{|\boldsymbol{x} - \boldsymbol{y}|}{\varepsilon} h^\varepsilon(\boldsymbol{x} - \boldsymbol{y}) q^\varepsilon(\boldsymbol{z}) q^\varepsilon(\boldsymbol{y}) d\boldsymbol{z} d\boldsymbol{y} d\boldsymbol{x} \\
& + \iiint \rho\left(\frac{|\boldsymbol{x} - \boldsymbol{z}|}{\varepsilon^{1/2}} \right) \left[ 1 - \rho\left(\frac{|\boldsymbol{y} - \boldsymbol{z}|}{\varepsilon^{1/3}} \right) \right] \frac{|\boldsymbol{x} - \boldsymbol{y}|}{\varepsilon} h^\varepsilon(\boldsymbol{x} - \boldsymbol{y}) q^\varepsilon(\boldsymbol{z}) q^\varepsilon(\boldsymbol{y}) d\boldsymbol{z} d\boldsymbol{y} d\boldsymbol{x}.
\end{align*}
The first integral is estimated with the vorticity maximal function.
\begin{align*}
&\iiint \rho\left(\frac{|\boldsymbol{x} - \boldsymbol{z}|}{\varepsilon^{1/2}} \right) \rho\left(\frac{|\boldsymbol{y} - \boldsymbol{z}|}{\varepsilon^{1/3}} \right) \frac{|\boldsymbol{x} - \boldsymbol{y}|}{\varepsilon} h^\varepsilon(\boldsymbol{x} - \boldsymbol{y}) q^\varepsilon(\boldsymbol{z}) q^\varepsilon(\boldsymbol{y}) d\boldsymbol{z} d\boldsymbol{y} d\boldsymbol{x} \\
&\leq \| w_1 h \|_{L^1} \iint \rho\left(\frac{|\boldsymbol{y} - \boldsymbol{z}|}{\varepsilon^{1/3}} \right) q^\varepsilon(\boldsymbol{z}) q^\varepsilon(\boldsymbol{y}) d\boldsymbol{z} d\boldsymbol{y} \\
&\leq \| w_1 h \|_{L^1} \| q_0 \|_{\mathcal{M}} M(2\varepsilon^{1/3}, q^\varepsilon) \\
&\leq c \| w_1 h \|_{L^1} \| q_0 \|_{\mathcal{M}} \left[ \log{\left(\frac{1}{4\varepsilon^{1/3} + \varepsilon}\right)}\right]^{-1/2}.
\end{align*}
Owing to $w_3 h \in L^\infty(\mathbb{R}^2)$, the second one is estimated as
\begin{align*}
&\iiint \rho\left(\frac{|\boldsymbol{x} - \boldsymbol{z}|}{\varepsilon^{1/2}} \right) \left[ 1 - \rho\left(\frac{|\boldsymbol{y} - \boldsymbol{z}|}{\varepsilon^{1/3}} \right) \right] \frac{|\boldsymbol{x} - \boldsymbol{y}|}{\varepsilon} h^\varepsilon(\boldsymbol{x} - \boldsymbol{y}) q^\varepsilon(\boldsymbol{z}) q^\varepsilon(\boldsymbol{y}) d\boldsymbol{z} d\boldsymbol{y} d\boldsymbol{x} \\
&\leq \| w_3 h \|_{L^\infty} \iiint \rho\left(\frac{|\boldsymbol{x} - \boldsymbol{z}|}{\varepsilon^{1/2}} \right) \left[ 1 - \rho\left(\frac{2|\boldsymbol{x} - \boldsymbol{y}|}{\varepsilon^{1/3}} \right) \right] \frac{1}{|\boldsymbol{x} - \boldsymbol{y}|^2} q^\varepsilon(\boldsymbol{z}) q^\varepsilon(\boldsymbol{y}) d\boldsymbol{z} d\boldsymbol{y} d\boldsymbol{x} \\
&\leq c \varepsilon^{-2/3} \| w_3 h \|_{L^\infty} \| q_0 \|_{\mathcal{M}}^2 \int_{\mathbb{R}^2} \rho\left(\frac{|\boldsymbol{x}|}{\varepsilon^{1/2}} \right) d\boldsymbol{x} \\
&\leq c \varepsilon^{1/3} \| w_3 h \|_{L^\infty} \| q_0 \|_{\mathcal{M}}^2.
\end{align*}
Thus, $W_{111} \rightarrow 0$ as $\varepsilon \rightarrow 0$. Since it is easily checked that
\begin{align*}
W_{112} \leq \varepsilon^{1/2} \| w_1 h \|_{L^1} \| q_0 \|_{\mathcal{M}}^2,
\end{align*}
we obtain $W_{11} \rightarrow 0$ as $\varepsilon \rightarrow 0$. As for the convergence of $W_{12}$, we have
\begin{align*}
W_{12}&= \int_{\mathbb{R}^2}\int_{\mathbb{R}^2} \nabla \psi(\boldsymbol{y})\cdot (\boldsymbol{u}^\varepsilon(\boldsymbol{x}) - \boldsymbol{u}^\varepsilon(\boldsymbol{y}) ) h^\varepsilon(\boldsymbol{x} - \boldsymbol{y}) q^\varepsilon(\boldsymbol{y}) d\boldsymbol{y} d\boldsymbol{x} \\
&= \iint \nabla \psi(\boldsymbol{y}) \cdot ( h^\varepsilon \ast (\boldsymbol{K}^\varepsilon - \boldsymbol{K})) (\boldsymbol{y} - \boldsymbol{z})  q^\varepsilon(\boldsymbol{z}) q^\varepsilon(\boldsymbol{y}) d\boldsymbol{z} d\boldsymbol{y} \\
&= \frac{1}{2} \iint (\nabla \psi(\boldsymbol{y}) - \nabla \psi(\boldsymbol{z})) \cdot ( h^\varepsilon \ast (\boldsymbol{K}^\varepsilon - \boldsymbol{K})) (\boldsymbol{y} - \boldsymbol{z}) q^\varepsilon(\boldsymbol{z}) q^\varepsilon(\boldsymbol{y}) d\boldsymbol{z} d\boldsymbol{y}
\end{align*}
Note that it follows that
\begin{equation*}
|( h^\varepsilon \ast (\boldsymbol{K}^\varepsilon - \boldsymbol{K})) (\boldsymbol{x})| \leq c \frac{\varepsilon}{|\boldsymbol{x}| + \varepsilon} \frac{1}{|\boldsymbol{x}|}.
\end{equation*}
Then, we find 
\begin{align*}
|W_{12}|&\leq c \| \nabla^2 \psi \|_{L^\infty} \iint \rho\left(\frac{|\boldsymbol{y} - \boldsymbol{z}|}{\varepsilon^{1/3}} \right) q^\varepsilon(\boldsymbol{z}) q^\varepsilon(\boldsymbol{y}) d\boldsymbol{z} d\boldsymbol{y} \\
& \quad + c \| \nabla \psi \|_{L^\infty} \iint \left[ 1 - \rho\left(\frac{|\boldsymbol{y} - \boldsymbol{z}|}{\varepsilon^{1/3}} \right) \right] \frac{\varepsilon}{|\boldsymbol{y} - \boldsymbol{z}|^2} q^\varepsilon(\boldsymbol{z}) q^\varepsilon(\boldsymbol{y}) d\boldsymbol{z} d\boldsymbol{y} \\
&\leq c \| q_0 \|_{\mathcal{M}} \left[ \| \nabla^2 \psi \|_{L^\infty} \left[ \log{\left(\frac{1}{4\varepsilon^{1/3} + \varepsilon}\right)}\right]^{-1/2} + \varepsilon^{1/3} \| \nabla \psi \|_{L^\infty} \| q_0 \|_{\mathcal{M}} \right]
\end{align*}
so that $W_{12} \rightarrow 0$ as $\varepsilon \rightarrow 0$. Consequently, we obtain $\int_0^T W_1(t) dt \rightarrow 0$ as $\varepsilon \rightarrow 0$.

It is important to remark that it follows from the above proof that 
\begin{equation}
\left| \int_0^T W_1(t) dt \right| \leq c \| \nabla^\perp \psi \|_{L^1([0,T]; W^{1, \infty}(\mathbb{R}^2))}. \label{W1-bdd}
\end{equation}
Thus, combining (\ref{W1-bdd}) and Lemma~\ref{bdd-u-eps}, we find that (\ref{weak-form-velo-1}) yields
\begin{align*}
\left| \int_0^T \int_{\mathbb{R}^2} \nabla^\perp \psi \cdot \partial_t \boldsymbol{v}^\varepsilon d\boldsymbol{x} dt \right| &\leq c \| \nabla^\perp \psi \|_{L^1([0,T]; W^{1, \infty}(\mathbb{R}^2))} \\
&\leq c \| \nabla^\perp \psi \|_{L^1([0,T]; H^3(\mathbb{R}^2))}
\end{align*}
and thus
\begin{equation}
\left| \int_0^T \int_{\mathbb{R}^2} \nabla^\perp \psi \cdot \partial_t \boldsymbol{u}^\varepsilon d\boldsymbol{x} dt \right| \leq c \| \nabla^\perp \psi \|_{L^1([0,T]; H^3(\mathbb{R}^2))}. \label{Lip-u-eps}
\end{equation}
Hence, $\boldsymbol{v}^\varepsilon$ and $\boldsymbol{u}^\varepsilon$ are uniformly bounded in $\mathrm{Lip}([0,T];H_{\mathrm{loc}}^{-3}(\mathbb{R}^2))$.

Next, we see that the filtered velocity $\boldsymbol{u}^\varepsilon$ converges to a weak solution of the 2D Euler equations in the sense of Definition~\ref{def-sol-velocity}. Note that Lemma~\ref{bdd-u-eps} implies that there exist a subsequence $\{\boldsymbol{u}^{\varepsilon_j} \}$ and $\boldsymbol{u} \in L^2_{\mathrm{loc}}(\mathbb{R}^2 \times [0,T])$ such that
\begin{equation}
\boldsymbol{u}^{\varepsilon_j} \rightharpoonup \boldsymbol{u} \quad \mathrm{weakly} \ \mathrm{in} \ L^2_{\mathrm{loc}}(\mathbb{R}^2 \times [0,T]). \label{conv-u-eps}
\end{equation}
Then, it follows from (\ref{Lip-u-eps}) that $\boldsymbol{u}$ belongs to $\mathrm{Lip}([0,T]; H_{\mathrm{loc}}^{-3}(\mathbb{R}^2))$. On the other hand, the weak form (\ref{weak-form-velo-1})
is rewritten by
\begin{equation*}
\int_0^T \int_{\mathbb{R}^2} \left( \nabla^\perp\partial_t \psi \cdot \boldsymbol{u}^\varepsilon + \nabla^\perp \otimes \nabla \psi : \boldsymbol{u}^\varepsilon \otimes \boldsymbol{u}^\varepsilon \right) d\boldsymbol{x} dt = \int_0^T W_1(t) dt + \int_0^T W_2(t) dt,
\end{equation*}
where $W_1$ is given in (\ref{W1}) and $W_2$ is 
\begin{align*}
W_2 = \int_{\mathbb{R}^2}  \nabla^\perp\partial_t \psi \cdot \left( \boldsymbol{u}^\varepsilon - \boldsymbol{v}^\varepsilon \right)  d\boldsymbol{x}.
\end{align*}
We show that $\int_0^T W_2 dt$ converges to zero in the $\varepsilon \rightarrow 0$ limit. It is sufficient to show that
\begin{equation}
\int_0^T \int_{|\boldsymbol{x}| \leq R} | \boldsymbol{u}^\varepsilon - \boldsymbol{v}^\varepsilon | d\boldsymbol{x} dt \rightarrow 0, \label{conv-u-eps-v-eps}
\end{equation}
as $\varepsilon \rightarrow 0$. The integral in (\ref{conv-u-eps-v-eps}) is decomposed into the following three parts with the approximate-solution sequences, $\{ \boldsymbol{v}^{\varepsilon, \delta} \}$ and $\{ \boldsymbol{u}^{\varepsilon, \delta} \}$.
\begin{align*}
\int_{|\boldsymbol{x}| \leq R} | \boldsymbol{u}^\varepsilon - \boldsymbol{v}^\varepsilon | d\boldsymbol{x} & \leq \int_{|\boldsymbol{x}| \leq R} | \boldsymbol{u}^\varepsilon - \boldsymbol{u}^{\varepsilon, \delta} | d\boldsymbol{x} \\
&\quad + \int_{|\boldsymbol{x}| \leq R} | \boldsymbol{u}^{\varepsilon, \delta} -  \boldsymbol{v}^{\varepsilon, \delta} | d\boldsymbol{x} + \int_{|\boldsymbol{x}| \leq R} | \boldsymbol{v}^{\varepsilon, \delta} - \boldsymbol{v}^\varepsilon | d\boldsymbol{x} \\
& \equiv J_1 + J_2 + J_3.
\end{align*}
For the convergence of $J_1$, recall that $\boldsymbol{u}^{\varepsilon}$ and $\boldsymbol{u}^{\varepsilon, \delta}$ are continuous in $\mathbb{R}^2 \times [0,T]$ and, owing to (\ref{lim_q_eps_delta}),
\begin{equation*}
(\boldsymbol{u}^\varepsilon - \boldsymbol{u}^{\varepsilon, \delta})(\boldsymbol{x}) = \int_{\mathbb{R}^2} \boldsymbol{K}^\varepsilon(\boldsymbol{x} - \boldsymbol{y}) (q^{\varepsilon}(\boldsymbol{y}) - q^{\varepsilon, \delta}(\boldsymbol{y}) ) d\boldsymbol{y} 
\end{equation*}
converges pointwise to zero as $\delta \rightarrow 0$ for any fixed $\varepsilon > 0$. Since we have $\| \boldsymbol{u}^\varepsilon \|_{L^\infty} \leq \| \boldsymbol{K}^\varepsilon \|_{L^\infty} \| q_0 \|_{\mathcal{M}}$ and $\| \boldsymbol{u}^{\varepsilon, \delta} \|_{L^\infty} \leq \| \boldsymbol{K}^\varepsilon \|_{L^\infty} \| q_0 \|_{\mathcal{M}}$, the dominated convergence theorem yields $J_1 \rightarrow 0$ as $\delta \rightarrow 0$. As for $J_2$, we use Proposition~3.8 in \cite{G.}, which is the estimate for the $L^1$-norm of $\boldsymbol{K}^{\varepsilon} -  \boldsymbol{K}$. Then, it follows that
\begin{align*}
J_2 \leq \int_{\mathbb{R}^2} \left| (\boldsymbol{K}^\varepsilon - \boldsymbol{K}) \ast q^{\varepsilon, \delta} \right| d\boldsymbol{x} \leq \| \boldsymbol{K}^{\varepsilon} -  \boldsymbol{K} \|_{L^1} \| q^{\varepsilon, \delta} \|_{L^1} \leq \varepsilon \| w_1 h \|_{L^1} \| q_0 \|_{\mathcal{M}}
\end{align*}
and thus the convergence of $J_2$ holds. To see the convergence of $J_3$, the following lemma plays an important role, see \cite{Diperna,Majda-1} for its proof.

\begin{lemma}
{\it
Let $\{ \boldsymbol{v}_n \}_{n \in \mathbb{N}}$ be a sequence in $L^\infty([0,T]; L^2_{\mathrm{loc}}(\mathbb{R}^2))$ satisfying the following conditions:
\begin{itemize}
\item[(i)] $\operatorname{div} \boldsymbol{v}_n = 0$ in the sense of distributions.
\item[(ii)] $q_n = \operatorname{curl} \boldsymbol{v}_n$ satisfies $\sup_{0\leq t \leq T} \| q_n(\cdot, t) \|_{L^1} \leq c(T)$.
\item[(iii)] $\{ \boldsymbol{v}_n \}$ are uniformly bounded in $\mathrm{Lip}([0,T];H_{\mathrm{loc}}^{-L}(\mathbb{R}^2))$ for some $L > 0$.
\end{itemize}
Then, there exist a subsequence $\{ \boldsymbol{v}_{n_j} \}$ and $\boldsymbol{v} \in L^\infty([0,T]; L^2_{\mathrm{loc}}(\mathbb{R}^2))$ with $\operatorname{div} \boldsymbol{v} = 0$ in the sense of distributions such that 
\begin{equation*}
\int_0^T \int_{|\boldsymbol{x}| \leq R} | \boldsymbol{v}_{n_j} - \boldsymbol{v}| d\boldsymbol{x} dt \rightarrow 0 
\end{equation*}
and $q_{n_j} \overset{\ast}{\rightharpoonup} q = \operatorname{curl} \boldsymbol{v}$ weakly--$\ast$ in $L^\infty([0,T];\mathcal{M}(\mathbb{R}^2))$ as $j \rightarrow \infty$.
} \label{L1-conv-v-eps}
\end{lemma}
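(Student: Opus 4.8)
The plan is to follow the compactness argument of DiPerna--Majda \cite{Diperna,Majda-1}: produce candidate limits by weak-$*$ compactness, and then upgrade the weak convergence of the velocities to strong convergence in $L^1_{\mathrm{loc}}$ by means of a uniform spatial-mollification estimate that exploits the divergence-free structure. First I would extract the limits. Since $\{\boldsymbol{v}_n\}$ is bounded in $L^\infty([0,T];L^2(B_R))$ for every $R$, the weak-$*$ compactness of $L^\infty([0,T];L^2(B_R)) = (L^1([0,T];L^2(B_R)))^\ast$ together with a diagonal argument over an exhaustion $B_R \uparrow \mathbb{R}^2$ yields a subsequence $\boldsymbol{v}_{n_j} \overset{\ast}{\rightharpoonup} \boldsymbol{v} \in L^\infty([0,T];L^2_{\mathrm{loc}}(\mathbb{R}^2))$. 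Passing to the limit in the distributional identities $\operatorname{div}\boldsymbol{v}_n = 0$ and $\operatorname{curl}\boldsymbol{v}_n = q_n$ gives $\operatorname{div}\boldsymbol{v} = 0$; moreover condition (ii) bounds $\{q_n\}$ in $L^\infty([0,T];\mathcal{M}(\mathbb{R}^2)) = (L^1([0,T];C_0(\mathbb{R}^2)))^\ast$, so after a further extraction $q_{n_j} \overset{\ast}{\rightharpoonup} q = \operatorname{curl}\boldsymbol{v}$ weakly-$*$ in $L^\infty([0,T];\mathcal{M}(\mathbb{R}^2))$. It then remains to prove $\boldsymbol{v}_{n_j} \to \boldsymbol{v}$ strongly in $L^1(B_R \times [0,T])$ for every $R$.

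For this I would introduce the spatial mollification $\boldsymbol{v}_n^\eta = \boldsymbol{v}_n \ast \varphi_\eta$, $\varphi_\eta$ a standard mollifier, and argue in two steps. In the first step, for fixed $\eta$, condition (iii) forces $t \mapsto \boldsymbol{v}_n^\eta(\cdot,t)$ to be uniformly Lipschitz with values in $C(\overline{B_R})$ (testing $\boldsymbol{v}_n(t)-\boldsymbol{v}_n(s)$ against the fixed smooth bump $\varphi_\eta(x-\cdot)$ costs $\|\varphi_\eta(x-\cdot)\|_{H^L}\,|t-s|$), while the $L^2_{\mathrm{loc}}$-bound makes $\boldsymbol{v}_n^\eta(\cdot,t)$ uniformly bounded in $C^1(\overline{B_R})$; Arzel\`a--Ascoli and a diagonal extraction over $R \in \mathbb{N}$ and $\eta = 1/m$ then give $\boldsymbol{v}_{n_j}^\eta \to \boldsymbol{v}\ast\varphi_\eta$ uniformly on compact subsets of $\mathbb{R}^2 \times [0,T]$, hence in $L^1(B_R \times [0,T])$. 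In the second step I control the mollification defect: on a neighbourhood of $B_R$ decompose $\boldsymbol{v}_n = \boldsymbol{K}\ast(\theta q_n) + \boldsymbol{w}_n$, where $\theta \in C_c^\infty$, $0 \le \theta \le 1$, equals $1$ near $\overline{B_R}$; then $\operatorname{div}\boldsymbol{w}_n = 0$ and $\operatorname{curl}\boldsymbol{w}_n = (1-\theta)q_n$ vanishes near $\overline{B_R}$, so $\boldsymbol{w}_n$ is (componentwise) harmonic there, and the mean value property together with $\|\boldsymbol{w}_n\|_{L^1(B_{2R})} \le \|\boldsymbol{v}_n\|_{L^1(B_{2R})} + \|\boldsymbol{K}\|_{L^1(B_{4R})}\|q_n\|_{\mathcal{M}}$ yields a uniform $C^1(\overline{B_R})$ bound, whence $\|\boldsymbol{w}_n - \boldsymbol{w}_n^\eta\|_{L^1(B_R)} \le c\,\eta$. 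The remaining piece is $(\boldsymbol{K} - \boldsymbol{K}\ast\varphi_\eta)\ast(\theta q_n)$, whose $L^1(B_R)$-norm is bounded, by Young's inequality for convolution with a measure, by $\|\boldsymbol{K} - \boldsymbol{K}\ast\varphi_\eta\|_{L^1(B_{4R})}\,\|q_n\|_{\mathcal{M}}$; this tends to $0$ as $\eta\to 0$ \emph{uniformly in $n$} because $\boldsymbol{K}\in L^1_{\mathrm{loc}}(\mathbb{R}^2)$. Combining, $\sup_n \|\boldsymbol{v}_n - \boldsymbol{v}_n^\eta\|_{L^1(B_R \times [0,T])}\to 0$ as $\eta\to 0$.

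Finally I would combine the two steps through the triangle inequality, estimating $\|\boldsymbol{v}_{n_j} - \boldsymbol{v}\|_{L^1(B_R\times[0,T])}$ by $\|\boldsymbol{v}_{n_j} - \boldsymbol{v}_{n_j}^\eta\|_{L^1} + \|\boldsymbol{v}_{n_j}^\eta - \boldsymbol{v}^\eta\|_{L^1} + \|\boldsymbol{v}^\eta - \boldsymbol{v}\|_{L^1}$: given $\epsilon>0$, fix $\eta$ so small that the first term (uniformly in $j$, by the second step) and the third term (by $L^1_{\mathrm{loc}}$-continuity of mollification applied to the fixed limit $\boldsymbol{v}$) are each below $\epsilon/3$, then send $j\to\infty$ so that the middle term vanishes by the first step; a final diagonal extraction over $R\uparrow\infty$ gives the stated convergence, with $q = \operatorname{curl}\boldsymbol{v}$ and $q_{n_j}\overset{\ast}{\rightharpoonup}q$ already established. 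I expect the second step to be the main obstacle: a generic sequence bounded in $L^\infty([0,T];L^2_{\mathrm{loc}}(\mathbb{R}^2))$ is \emph{not} precompact in $L^1_{\mathrm{loc}}$, and the uniform smallness of the mollification defect is exactly where the divergence-free condition (i) and the uniform $L^1$-bound (ii) on the vorticities are used; this is the compensated-compactness core of the lemma, whereas the first step and the extraction of limits are soft functional analysis.
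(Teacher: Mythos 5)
The paper does not give its own proof of this lemma—it defers to \cite{Diperna,Majda-1}—and your argument reconstructs essentially that standard proof: weak-$\ast$ extraction of $\boldsymbol{v}$ and $q=\operatorname{curl}\boldsymbol{v}$, temporal equicontinuity of the spatially mollified velocities from the uniform $\mathrm{Lip}([0,T];H^{-L}_{\mathrm{loc}}(\mathbb{R}^2))$ bound, and uniform-in-$n$ smallness of the mollification defect via the splitting $\boldsymbol{v}_n=\boldsymbol{K}\ast(\theta q_n)+\boldsymbol{w}_n$ with $\boldsymbol{w}_n$ harmonic near $B_R$, using $\sup_t\|q_n(\cdot,t)\|_{L^1}\leq c(T)$ and $\boldsymbol{K}\in L^1_{\mathrm{loc}}(\mathbb{R}^2)$. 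Your proof is correct, granting (as you implicitly do, and as the intended notion of approximate-solution sequence requires) that the hypothesis includes a uniform bound on $\{\boldsymbol{v}_n\}$ in $L^\infty([0,T];L^2_{\mathrm{loc}}(\mathbb{R}^2))$.
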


Since $\{ \boldsymbol{v}^{\varepsilon, \delta} \}$ satisfies the assumption in Lemma~\ref{L1-conv-v-eps}, we have the existence of the $\delta \rightarrow 0$ limit velocity field $\overline{\boldsymbol{v}}^\varepsilon$ and 
\begin{equation*}
\int_0^T \int_{|\boldsymbol{x}| \leq R} | \boldsymbol{v}^{\varepsilon, \delta} - \overline{\boldsymbol{v}}^\varepsilon | d\boldsymbol{x} dt \rightarrow 0.
\end{equation*}
Considering the convergence of the vorticity (\ref{lim_q_eps_delta}), we find that $\overline{\boldsymbol{v}}^\varepsilon = \boldsymbol{v}^\varepsilon$ in the sense of distributions and thus  $\int_0^T J_3 dt \rightarrow 0$ as $\delta \rightarrow 0$. Hence, taking the limit of $\varepsilon \rightarrow 0$ after the $\delta \rightarrow 0$ limit, we obtain (\ref{conv-u-eps-v-eps}).

Combining the convergences of $W_1$ and $W_2$, we have the following weak consistency.
\begin{equation*}
\int_0^T \int_{\mathbb{R}^2} \left( \nabla^\perp\partial_t \psi \cdot \boldsymbol{u}^\varepsilon + \nabla^\perp \otimes \nabla \psi : \boldsymbol{u}^\varepsilon \otimes \boldsymbol{u}^\varepsilon \right) d\boldsymbol{x} dt \rightarrow 0 \quad \mathrm{as} \quad \varepsilon \rightarrow 0.
\end{equation*}
Since (\ref{conv-u-eps}) yields 
\begin{equation*}
\int_0^T \int_{\mathbb{R}^2} \nabla^\perp\partial_t \psi \cdot \boldsymbol{u}^\varepsilon  d\boldsymbol{x} dt \rightarrow \int_0^T \int_{\mathbb{R}^2} \nabla^\perp\partial_t \psi \cdot \boldsymbol{u}  d\boldsymbol{x} dt
\end{equation*}
and the convergence,
\begin{equation*}
\int_0^T \int_{\mathbb{R}^2} \nabla^\perp \otimes \nabla \psi : \boldsymbol{u}^\varepsilon \otimes \boldsymbol{u}^\varepsilon  d\boldsymbol{x} dt \rightarrow \int_0^T \int_{\mathbb{R}^2} \nabla^\perp \otimes \nabla \psi : \boldsymbol{u} \otimes \boldsymbol{u} d\boldsymbol{x} dt,
\end{equation*}
can be shown by the Delort's idea in the same manner as the proof in \cite{Liu}, the limit velocity field $\boldsymbol{u} \in L^\infty([0,T]; L^2_{\mathrm{loc}}(\mathbb{R}^2)) \cap \mathrm{Lip}([0,T]; H_{\mathrm{loc}}^{-3}(\mathbb{R}^2))$ is the desired weak solution of the 2D Euler equations in the sense of Definition~\ref{def-sol-velocity}.

\subsection{Convergence of the vortex method}

We prove Theorem~\ref{convergence-thm2}. In the vortex method, initial vorticity (\ref{init-pv}) belongs to $\mathcal{M}(\mathbb{R}^2)$, but not $H^{-1}_{\mathrm{loc}}(\mathbb{R}^2)$. This is the only difference from Theorem~\ref{convergence-thm1} in which initial vorticity is in $\mathcal{M}(\mathbb{R}^2) \cap H^{-1}_{\mathrm{loc}}(\mathbb{R}^2)$. According to the proof of Theorem~\ref{convergence-thm1}, we utilize the assumption $q_0 \in H^{-1}_{\mathrm{loc}}(\mathbb{R}^2)$ only for deriving the vorticity maximal function. In other words, if we obtain the uniform decay of the vorticity maximal function for the approximate solution (\ref{sol-pv}), the convergence theorem can be proven in the same way as Theorem~\ref{convergence-thm1}. Note that the vorticity maximal function in the vortex method is defined by
\begin{equation*}
\sup_{0\leq t \leq T, \boldsymbol{x}_0 \in\mathbb{R}^2 } \sum_{|\boldsymbol{x}_n(t) - \boldsymbol{x}_0| < r} \Gamma_n,
\end{equation*}
where $\boldsymbol{x}_n(t)$ is a solution to the Hamiltonian dynamical system (\ref{FPV}) with the Hamiltonian,
\begin{equation*}
\mathscr{H}^\varepsilon(t) = - \sum_{m, n \in \mathbb{N}} \Gamma_m \Gamma_n G^\varepsilon \left( \boldsymbol{x}^\varepsilon_m(t) - \boldsymbol{x}^\varepsilon_n(t) \right),
\end{equation*}
which is a conserved quantity, i.e., $\mathscr{H}^\varepsilon(t) = \mathscr{H}^\varepsilon(0)$ for any $t > 0$. 

The proof of the uniform decay proceeds in the same manner as that in \cite{Liu}. We begin with the following lemma.

\begin{lemma}
{\it
Suppose that there exits a constant $c > 0$ such that $\eta / \varepsilon \leq c$. Then,  $\mathscr{H}^\varepsilon(0)$ is bounded independent of $\varepsilon$ and $\eta$.
} \label{bdd_disc_H}
\end{lemma}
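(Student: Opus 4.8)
The plan is to compare the discrete Hamiltonian $\mathscr{H}^\varepsilon(0)$ with the pseudo-energy $H^\varepsilon(q_0)$ of the \emph{non-discretised} vortex sheet, which is already known to be uniformly bounded, and to absorb the difference into a bound uniform in $\varepsilon,\eta$. Assume $q_0\ge 0$ as before and write $\Gamma_n=\int_{S_n}q_0\,d\boldsymbol{x}$; since $\widehat{q}_0=\sum_n\Gamma_n\delta_{\boldsymbol{c}_n}$,
\[
\mathscr{H}^\varepsilon(0)=H^\varepsilon(\widehat{q}_0)=-\sum_{m,n}\int_{S_m}\!\!\int_{S_n}G^\varepsilon(\boldsymbol{c}_m-\boldsymbol{c}_n)\,q_0(\boldsymbol{x})q_0(\boldsymbol{y})\,d\boldsymbol{y}\,d\boldsymbol{x},
\]
while $H^\varepsilon(q_0)$ is the same expression with $G^\varepsilon(\boldsymbol{c}_m-\boldsymbol{c}_n)$ replaced by $G^\varepsilon(\boldsymbol{x}-\boldsymbol{y})$. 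Letting $\delta\to 0$ in the bound for $|H_0^{\varepsilon,\delta}|$ obtained after Lemma~\ref{Lemm_VMF} shows that $|H^\varepsilon(q_0)|$ is bounded uniformly in $\varepsilon$. Since $\sum_{m,n}\Gamma_m\Gamma_n=\bigl(\sum_n\Gamma_n\bigr)^2=\|q_0\|_{\mathcal{M}}^2$, the lemma follows once we prove a pointwise bound
\[
\bigl|\,G^\varepsilon(\boldsymbol{c}_m-\boldsymbol{c}_n)-G^\varepsilon(\boldsymbol{x}-\boldsymbol{y})\,\bigr|\le C_*(h,c)\qquad\text{for all }m,n,\ \boldsymbol{x}\in S_m,\ \boldsymbol{y}\in S_n,
\]
independent of $\varepsilon$ and $\eta$; for then $|\mathscr{H}^\varepsilon(0)|\le|H^\varepsilon(q_0)|+C_*(h,c)\,\|q_0\|_{\mathcal{M}}^2$.

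To prove this, observe $|(\boldsymbol{c}_m-\boldsymbol{c}_n)-(\boldsymbol{x}-\boldsymbol{y})|\le\sqrt2\,\eta$ (the sum of the half-diagonals of $S_m,S_n$) and split the pairs $(m,n)$ into two regimes. \textbf{Separated pairs}, $|\boldsymbol{c}_m-\boldsymbol{c}_n|\ge 2\sqrt2\,\eta$: the whole segment joining $\boldsymbol{x}-\boldsymbol{y}$ to $\boldsymbol{c}_m-\boldsymbol{c}_n$ then lies at distance $\ge\sqrt2\,\eta$ from the origin, and as $|\nabla G^\varepsilon|=|\boldsymbol{K}^\varepsilon|$ with $|\boldsymbol{K}^\varepsilon(\boldsymbol{\xi})|=|\boldsymbol{K}(\boldsymbol{\xi})|\,P_K(|\boldsymbol{\xi}|/\varepsilon)\le\tfrac{1}{2\pi|\boldsymbol{\xi}|}$ (because $0\le P_K\le 1$), the mean value inequality gives $\bigl|G^\varepsilon(\boldsymbol{c}_m-\boldsymbol{c}_n)-G^\varepsilon(\boldsymbol{x}-\boldsymbol{y})\bigr|\le\tfrac{\sqrt2\,\eta}{2\pi\sqrt2\,\eta}=\tfrac{1}{2\pi}$, with no dependence on $\varepsilon,\eta$. \textbf{Close pairs} (including the diagonal $m=n$), $|\boldsymbol{c}_m-\boldsymbol{c}_n|<2\sqrt2\,\eta$: using $\eta\le c\varepsilon$, both arguments have modulus $<3\sqrt2\,\eta\le 3\sqrt2\,c\,\varepsilon=:W\varepsilon$, and writing $G^\varepsilon(\boldsymbol{\xi})=\tfrac{\log\varepsilon}{2\pi}\|h\|_{L^1}+\tfrac{1}{2\pi}\int\log|\boldsymbol{\xi}/\varepsilon-\boldsymbol{z}|\,h(\boldsymbol{z})\,d\boldsymbol{z}$, the $\boldsymbol{z}$-independent, $\varepsilon$-divergent term $\tfrac{\log\varepsilon}{2\pi}\|h\|_{L^1}$ cancels in the difference; hence, by the triangle inequality for $\log$,
\[
\bigl|G^\varepsilon(\boldsymbol{c}_m-\boldsymbol{c}_n)-G^\varepsilon(\boldsymbol{x}-\boldsymbol{y})\bigr|\ \le\ \frac{1}{\pi}\,\sup_{|\boldsymbol{w}|\le W}\ \int_{\mathbb{R}^2}\bigl|\log|\boldsymbol{w}-\boldsymbol{z}|\bigr|\,h(\boldsymbol{z})\,d\boldsymbol{z}.
\]

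Finally one checks that this supremum is finite. Writing the integral in polar coordinates and using that $h$ is radial together with the mean value identity $\tfrac1{2\pi}\int_0^{2\pi}\log|re^{i\theta}-\boldsymbol{w}|\,d\theta=\log\max(r,|\boldsymbol{w}|)$ (so the angular average, hence the bound, stays controlled even as $\boldsymbol{w}\to 0$), one gets
\[
\int_{\mathbb{R}^2}\bigl|\log|\boldsymbol{w}-\boldsymbol{z}|\bigr|\,h(\boldsymbol{z})\,d\boldsymbol{z}\ \le\ C(W)\Bigl(\|h\|_{L^1}+\|w_1 h\|_{L^1}+\int_{|\boldsymbol{z}|<1}\bigl|\log|\boldsymbol{z}|\bigr|\,h(\boldsymbol{z})\,d\boldsymbol{z}\Bigr)
\]
uniformly for $|\boldsymbol{w}|\le W$; here the tail $|\boldsymbol{z}|\ge1$ is handled by $\log|\boldsymbol{z}|\le|\boldsymbol{z}|$ and $w_1 h\in L^1$, while $\int_{|\boldsymbol{z}|<1}\bigl|\log|\boldsymbol{z}|\bigr|\,h\,d\boldsymbol{z}<\infty$ follows from the growth bound $\|h\|_{L^1(B_\rho)}\lesssim\rho$ forced by the well-posedness hypothesis of \cite{G.}. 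This gives $C_*=C_*(h,c)$ and the lemma. The main obstacle is the close/diagonal regime: there $G^\varepsilon(\boldsymbol{c}_m-\boldsymbol{c}_n)$ and $G^\varepsilon(\boldsymbol{x}-\boldsymbol{y})$ each blow up like $\log(1/\varepsilon)$, so the argument rests entirely on using only their \emph{difference} (so the $\log\varepsilon$ cancels) and on extracting from the hypotheses on $h$ — chiefly $w_1 h\in L^1$ and the control of its singularity at the origin — the uniform logarithmic-potential estimate above; the hypothesis $\eta\le c\varepsilon$ is used exactly to confine $\boldsymbol{\xi}/\varepsilon$ to a fixed ball in this regime and to render the separated-pair estimate $\varepsilon$-free.
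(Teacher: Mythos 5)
Your proposal is correct and its skeleton is exactly the paper's: compare $\mathscr{H}^\varepsilon(0)$ with the pseudo-energy $H^\varepsilon(q_0)$ (known to be bounded uniformly in $\varepsilon$ from the estimate after Lemma~\ref{Lemm_VMF}), bound the summand-wise difference $\left|G^\varepsilon(\boldsymbol{c}_m-\boldsymbol{c}_n)-G^\varepsilon(\boldsymbol{x}-\boldsymbol{y})\right|$ by a constant independent of $\varepsilon,\eta$, and use $\sum_{m,n}\Gamma_m\Gamma_n=\|q_0\|_{\mathcal{M}}^2$. Where you diverge is in the key pointwise bound. The paper does it in one line: since $\|\nabla G^\varepsilon\|_{L^\infty}=\|\boldsymbol{K}^\varepsilon\|_{L^\infty}\le c/\varepsilon$ (a fact already used in Section 5.2), the mean value theorem gives
\begin{equation*}
\left|G^\varepsilon(\boldsymbol{c}_m-\boldsymbol{c}_n)-G^\varepsilon(\boldsymbol{x}-\boldsymbol{y})\right|\le \|\boldsymbol{K}^\varepsilon\|_{L^\infty}\left|(\boldsymbol{x}-\boldsymbol{c}_m)-(\boldsymbol{y}-\boldsymbol{c}_n)\right|\le c\,\frac{\eta}{\varepsilon}\le c,
\end{equation*}
with no case distinction. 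You instead split into separated pairs (handled by $0\le P_K\le 1$, hence $|\boldsymbol{K}^\varepsilon(\boldsymbol{\xi})|\le 1/(2\pi|\boldsymbol{\xi}|)$, along a segment avoiding the origin — this part is fine and even $\varepsilon$-free) and close pairs, where you cancel the $\log\varepsilon$ term and reduce to the uniform logarithmic-potential bound $\sup_{|\boldsymbol{w}|\le W}\int|\log|\boldsymbol{w}-\boldsymbol{z}||\,h(\boldsymbol{z})\,d\boldsymbol{z}<\infty$. That bound is indeed true under the standing hypotheses, but your justification is compressed: the mean value identity applies to $\log$, not $|\log|$, so you should first split $|\log t|=\log(1/t)+2(\log t)_+$, control the positive part by $\|h\|_{L^1}$ and $\|w_1h\|_{L^1}$, and only then use, for radial $h\ge 0$, the angular average $\tfrac{1}{2\pi}\int_0^{2\pi}\log|\boldsymbol{w}-se^{i\theta}|\,d\theta=\log\max(|\boldsymbol{w}|,s)$ to dominate the potential at $\boldsymbol{w}$ by the potential at the origin; finiteness of $\int_{|\boldsymbol{z}|<1}\log(1/|\boldsymbol{z}|)h\,d\boldsymbol{z}$ then does follow by a layer-cake argument from $\|h\|_{L^1(B_\rho)}=P_K(\rho)\lesssim\rho$, which is the same boundedness of $\boldsymbol{K}\ast h$ that the paper invokes. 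In short: your argument works and isolates the role of the singularity of $h$ more explicitly, but it rebuilds, via the log-potential estimate, exactly the Lipschitz bound $\|\nabla G^\varepsilon\|_{L^\infty}\le c/\varepsilon$ that the paper uses directly, so the paper's route is substantially shorter.
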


\begin{proof}
We recall the pseudo-energy (\ref{pseudo_energy}),
\begin{equation*}
H^\varepsilon_0 \equiv H^\varepsilon(q_0) = - \int_{\mathbb{R}^2} \int_{\mathbb{R}^2} G^\varepsilon(\boldsymbol{x} - \boldsymbol{y}) q_0(\boldsymbol{x}) q_0(\boldsymbol{y}) d\boldsymbol{x} d\boldsymbol{y},
\end{equation*}
and that, for $q_0 \in \mathcal{M}(\mathbb{R}^2) \cap H^{-1}_{\mathrm{loc}}(\mathbb{R}^2)$, $H^\varepsilon_0$ is bounded independent of $\varepsilon$. Then, we have 
\begin{align*}
\left| \mathscr{H}^\varepsilon(0) - H^\varepsilon_0 \right| \leq \sum_{m, n \in \mathbb{N}} \int_{S_m} \int_{S_n} \left|G^\varepsilon(\boldsymbol{c}_m - \boldsymbol{c}_n) - G^\varepsilon(\boldsymbol{x} - \boldsymbol{y}) \right| q_0(\boldsymbol{x}) q_0(\boldsymbol{y}) d\boldsymbol{x} d\boldsymbol{y}.
\end{align*}
Note that, for any $(\boldsymbol{x}, \boldsymbol{y}) \in S_m \times S_n$, it follows that
\begin{align*}
\left|G^\varepsilon(\boldsymbol{c}_m - \boldsymbol{c}_n) - G^\varepsilon(\boldsymbol{x} - \boldsymbol{y}) \right| \leq \| \nabla G^\varepsilon \|_{L^\infty} \left| (\boldsymbol{x} -\boldsymbol{c}_m) - (\boldsymbol{y} -\boldsymbol{c}_n) \right| \leq c \frac{\eta}{\varepsilon},
\end{align*}
since $\| \nabla G^\varepsilon \|_{L^\infty} = \| \boldsymbol{K}^\varepsilon \|_{L^\infty} \leq c /\varepsilon$. Thus, owing the assumption, we obtain
\begin{equation*}
\left| \mathscr{H}^\varepsilon(0) - H^\varepsilon_0 \right| \leq c \sum_{m, n \in \mathbb{N}} \int_{S_m} \int_{S_n}  q_0(\boldsymbol{x}) q_0(\boldsymbol{y}) d\boldsymbol{x} d\boldsymbol{y} = c \| q_0 \|^2_{\mathcal{M}}
\end{equation*}
so that $|\mathscr{H}^\varepsilon(0)| \leq |H^\varepsilon_0| + | \mathscr{H}^\varepsilon(0) - H^\varepsilon_0 | \leq c$, where $c > 0$ is independent of $\varepsilon$ and $\eta$.
\end{proof}

\begin{lemma}
{\it
There exists a constant $c > 0$ such that, for any $0 < r \leq 1/4$ and $T > 0$, we have
\begin{equation}
\sup_{0\leq t \leq T, \boldsymbol{x}_0 \in\mathbb{R}^2 } \sum_{|\boldsymbol{x}_n - \boldsymbol{x}_0| < r} \Gamma_n < c \left[ \log{\left( \frac{1}{2 r + \varepsilon}\right)} \right]^{- 1/2}. \label{DVMF_approximate}
\end{equation}
} \label{Lemm_DVMF}
\end{lemma}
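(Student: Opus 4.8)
\emph{Proof proposal.} The plan is to transcribe the proof of Lemma~\ref{Lemm_VMF} into the discrete setting, the conserved pseudo-energy $H^{\varepsilon,\delta}(t)$ being replaced by the conserved Hamiltonian $\mathscr{H}^\varepsilon(t)$ and the integrals against $q^{\varepsilon,\delta}$ by finite sums against the weights $\Gamma_n$. Three conserved quantities of the system (\ref{FPV}) enter. First, $\mathscr{H}^\varepsilon(t)=\mathscr{H}^\varepsilon(0)$, which is bounded uniformly in $\varepsilon$ and $\eta$ by Lemma~\ref{bdd_disc_H} (this is where the hypothesis $\eta\le c\varepsilon$ is used). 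Second, the total circulation $\sum_n\Gamma_n=\|q_0\|_{\mathcal M}$. Third, the discrete second moment $\mathcal I^\varepsilon(t)\equiv\sum_n\Gamma_n|\boldsymbol{x}^\varepsilon_n(t)|^2$: differentiating along (\ref{FPV}) and symmetrising the double sum in $m,n$ gives
\begin{equation*}
\frac{\mathrm{d}}{\mathrm{d}t}\mathcal I^\varepsilon(t)=\sum_{m\ne n}\Gamma_m\Gamma_n\,(\boldsymbol{x}^\varepsilon_m-\boldsymbol{x}^\varepsilon_n)\cdot\boldsymbol{K}^\varepsilon(\boldsymbol{x}^\varepsilon_m-\boldsymbol{x}^\varepsilon_n)=0,
\end{equation*}
because $\boldsymbol{K}^\varepsilon=\nabla^\perp G^\varepsilon$ with $G^\varepsilon$ radial forces $\boldsymbol{z}\cdot\boldsymbol{K}^\varepsilon(\boldsymbol{z})=0$; since $q_0$ has compact support in some $B_{\overline R}$ and $\eta\le c\varepsilon\le c$, we get $\mathcal I^\varepsilon(0)=\sum_n\Gamma_n|\boldsymbol{c}_n|^2\le(\overline R+\eta)^2\|q_0\|_{\mathcal M}\le c$, a bound independent of $\varepsilon$ and $\eta$.

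Next I would split $\mathscr{H}^\varepsilon(t)$ into the near part, over pairs with $|\boldsymbol{x}^\varepsilon_m(t)-\boldsymbol{x}^\varepsilon_n(t)|\le1/2$, and the far part, over $|\boldsymbol{x}^\varepsilon_m(t)-\boldsymbol{x}^\varepsilon_n(t)|>1/2$, exactly as in the decomposition of $H^{\varepsilon,\delta}(t)$. Using (\ref{est-G^eps-near}) together with the elementary identity $\sum_{m,n}\Gamma_m\Gamma_n|\boldsymbol{x}^\varepsilon_m-\boldsymbol{x}^\varepsilon_n|^2=2\big(\sum_n\Gamma_n\big)\mathcal I^\varepsilon(t)-2\big|\sum_n\Gamma_n\boldsymbol{x}^\varepsilon_n\big|^2\le2\|q_0\|_{\mathcal M}\mathcal I^\varepsilon(0)$, the far part is bounded above by a constant, $\sum_{|\boldsymbol{x}^\varepsilon_m-\boldsymbol{x}^\varepsilon_n|>1/2}\Gamma_m\Gamma_n G^\varepsilon(\boldsymbol{x}^\varepsilon_m-\boldsymbol{x}^\varepsilon_n)\le c$. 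Since $\mathscr{H}^\varepsilon(t)=\mathscr{H}^\varepsilon(0)$ is minus the sum of the near and far parts, this yields $-\sum_{|\boldsymbol{x}^\varepsilon_m-\boldsymbol{x}^\varepsilon_n|\le1/2}\Gamma_m\Gamma_n G^\varepsilon(\boldsymbol{x}^\varepsilon_m-\boldsymbol{x}^\varepsilon_n)\le|\mathscr{H}^\varepsilon(0)|+c\le c$. Inserting the bound $\log\frac{1}{|\boldsymbol{z}|+\varepsilon}\le c(-G^\varepsilon(\boldsymbol{z})+1)$ from (\ref{est-G^eps-far}), which holds for all $|\boldsymbol{z}|\le1/2$ including $\boldsymbol{z}=0$, and using $\sum_{m,n}\Gamma_m\Gamma_n\le\|q_0\|_{\mathcal M}^2$, I obtain the discrete analogue of (\ref{log-q-q-est}),
\begin{equation*}
\sum_{|\boldsymbol{x}^\varepsilon_m(t)-\boldsymbol{x}^\varepsilon_n(t)|\le1/2}\Gamma_m\Gamma_n\log\!\left(\frac{1}{|\boldsymbol{x}^\varepsilon_m(t)-\boldsymbol{x}^\varepsilon_n(t)|+\varepsilon}\right)\le c,
\end{equation*}
with $c$ independent of $\varepsilon$, $\eta$ and $t\in[0,T]$. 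The diagonal pairs $m=n$ contribute non-negatively on both sides and are simply absorbed into the constant, just as the term $\|q_0\|_{\mathcal M}^2$ is in Lemma~\ref{Lemm_VMF}.

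Finally I would localise as in the last step of Lemma~\ref{Lemm_VMF}: for fixed $t$ and $\boldsymbol{x}_0$ keep, in the last sum, only pairs with $|\boldsymbol{x}^\varepsilon_m(t)-\boldsymbol{x}_0|<r/2$ and $|\boldsymbol{x}^\varepsilon_n(t)-\boldsymbol{x}_0|<r/2$; for such pairs $|\boldsymbol{x}^\varepsilon_m(t)-\boldsymbol{x}^\varepsilon_n(t)|<r\le1/2$, and since all $\Gamma_n\ge0$ and the logarithm is positive for $\varepsilon$ small,
\begin{equation*}
\log\!\left(\frac{1}{r+\varepsilon}\right)\Bigg(\sum_{|\boldsymbol{x}^\varepsilon_n(t)-\boldsymbol{x}_0|<r/2}\Gamma_n\Bigg)^2\le c,
\end{equation*}
whence $\sum_{|\boldsymbol{x}^\varepsilon_n(t)-\boldsymbol{x}_0|<r/2}\Gamma_n\le c[\log(1/(r+\varepsilon))]^{-1/2}$ for $0<r\le1/2$; taking the supremum over $t\in[0,T]$ and $\boldsymbol{x}_0\in\mathbb{R}^2$ and relabelling $r/2\mapsto r$ gives (\ref{DVMF_approximate}) for $0<r\le1/4$. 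The only genuinely new point compared with Lemma~\ref{Lemm_VMF} — and the step I would be most careful about — is the conservation and uniform boundedness of the discrete second moment $\mathcal I^\varepsilon$, which here plays the role of the conserved second moment $M(t)$ of the smooth filtered flow; once that is in hand, the rest is a routine transcription of integrals into finite sums.
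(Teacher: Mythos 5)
Your proposal is correct and follows essentially the same route as the paper's proof: split the conserved discrete Hamiltonian (bounded uniformly via Lemma~\ref{bdd_disc_H}) into near and far pairs, control the far part with the estimate (\ref{est-G^eps-near}) and the conserved discrete second moment, apply (\ref{est-G^eps-far}) to the near part to get the discrete analogue of (\ref{log-q-q-est}), and then localize to balls of radius $r/2$ exactly as in Lemma~\ref{Lemm_VMF}. The only addition is that you explicitly verify the invariance of $\sum_n \Gamma_n |\boldsymbol{x}^\varepsilon_n(t)|^2$ (via $\boldsymbol{z}\cdot\boldsymbol{K}^\varepsilon(\boldsymbol{z})=0$), which the paper simply cites; this is a welcome but minor elaboration, not a different argument.
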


\begin{proof}
From the definition of $\mathscr{H}^\varepsilon$, we have
\begin{align*}
&- \sum_{|\boldsymbol{x}^\varepsilon_m - \boldsymbol{x}^\varepsilon_n| \leq 1/2 } \Gamma_m \Gamma_n G^\varepsilon \left( \boldsymbol{x}^\varepsilon_m(t) - \boldsymbol{x}^\varepsilon_n(t) \right) \\
&\leq \left| \mathscr{H}(t) \right| + \sum_{|\boldsymbol{x}^\varepsilon_m - \boldsymbol{x}^\varepsilon_n| > 1/2 } \Gamma_m \Gamma_n G^\varepsilon \left( \boldsymbol{x}^\varepsilon_m(t) - \boldsymbol{x}^\varepsilon_n(t) \right).
\end{align*}
It follows from (\ref{est-G^eps-near}) and (\ref{est-G^eps-far}) that
\begin{align*}
& c \sum_{|\boldsymbol{x}^\varepsilon_m - \boldsymbol{x}^\varepsilon_n| \leq 1/2 } \Gamma_m \Gamma_n \log{\left( \frac{1}{|\boldsymbol{x}^\varepsilon_m(t) - \boldsymbol{x}^\varepsilon_n(t)| + \varepsilon} \right)} \\
&\leq \left| \mathscr{H}(0) \right| + c \sum_{m, n \in \mathbb{N}} \Gamma_m \Gamma_n \left( |\boldsymbol{x}^\varepsilon_m(t)|^2 + |\boldsymbol{x}^\varepsilon_n(t)|^2 + 1 \right).
\end{align*}
Owing to Lemma~\ref{bdd_disc_H} and the invariance of the second moment for point vortices, i.e., $\sum_{n\in \mathbb{N}} \Gamma_n |\boldsymbol{x}^\varepsilon_n(t)|^2 = \sum_{n\in \mathbb{N}} \Gamma_n |\boldsymbol{x}_n(0)|^2$, we find 
\begin{equation*}
\sum_{|\boldsymbol{x}^\varepsilon_m - \boldsymbol{x}^\varepsilon_n| \leq 1/2 } \Gamma_m \Gamma_n \log{\left(\frac{1}{|\boldsymbol{x}^\varepsilon_m(t) - \boldsymbol{x}^\varepsilon_n(t)| + \varepsilon} \right)} \leq c,
\end{equation*}
Since it follows that
\begin{align*}
&\sum_{|\boldsymbol{x}^\varepsilon_m - \boldsymbol{x}^\varepsilon_n| \leq 1/2 } \Gamma_m \Gamma_n \log{\left( \frac{1}{|\boldsymbol{x}^\varepsilon_m(t) - \boldsymbol{x}^\varepsilon_n(t)| + \varepsilon} \right)} \geq \log{\left( \frac{1}{r + \varepsilon} \right)} \left( \sum_{|\boldsymbol{x}^\varepsilon_n - \boldsymbol{x}_0| \leq r/2} \Gamma_n \right)^2
\end{align*}
for any $ 0 < r \leq 1/2$, we obtain the desired result.
\end{proof}



\end{document}